\documentclass[12pt, reqno]{amsart}
\usepackage[scaled]{helvet}
\usepackage{mathtools}
\mathtoolsset{showonlyrefs}
\usepackage{etoolbox}
\usepackage[utf8]{inputenc}
\usepackage[T1]{fontenc}

\usepackage{amsmath,amssymb,amsfonts,mathrsfs,amsbsy}
\usepackage{relsize}
\usepackage{enumitem}
\usepackage{anyfontsize}
\usepackage{tikz,xcolor}
\usepackage{bm}
\usepackage{comment}
\usepackage[margin=1in]{geometry}
\definecolor{lava}{rgb}{0.81,0.06,0.13}
\definecolor{Cblue}{rgb}{0.50,0.85,0.85}
\definecolor{lime}{HTML}{A6CE39}

\usepackage[colorlinks,linkcolor=lava,citecolor=blue,urlcolor=Cblue,hypertexnames=false]{hyperref}

\usepackage{caption}  
\usepackage{subcaption}  
\usepackage{float}

\definecolor{Cgrey}{rgb}{0.85,0.85,0.85}
\definecolor{Cblue}{rgb}{0.50,0.85,0.85}
\definecolor{Cred}{rgb}{1,.2,.4}
\definecolor{fancy}{rgb}{0.10,0.85,0.10}
\definecolor{mygreen}{rgb}{0.01,0.6,0.2}
\definecolor{tealgreen}{rgb}{0.0, 0.51, 0.5}
\definecolor{tangerine}{rgb}{0.95, 0.52, 0.0}
\definecolor{saffron}{rgb}{0.96, 0.77, 0.19}
\definecolor{mint}{rgb}{0.24, 0.71, 0.54}
\definecolor{lincolngreen}{rgb}{0.11, 0.35, 0.02}
\definecolor{lava}{rgb}{0.81, 0.06, 0.13}
\definecolor{lasallegreen}{rgb}{0.03, 0.47, 0.19}
\definecolor{mahogany}{rgb}{0.75, 0.25, 0.0}
\definecolor{electricultramarine}{rgb}{0.25, 0.0, 1.0}
\definecolor{mypink1}{rgb}{0.858, 0.188, 0.478}
\definecolor{mypink2}{RGB}{219, 48, 122}
\definecolor{mypink3}{cmyk}{0, 0.7808, 0.4429, 0.1412}
\definecolor{mygray}{gray}{0.6}
\definecolor{venetianred}{rgb}{0.78, 0.03, 0.08}
\definecolor{sapphire}{rgb}{0.03, 0.15, 0.4}
\definecolor{utahcrimson}{rgb}{0.83, 0.0, 0.25}
\definecolor{trueblue}{rgb}{0.0, 0.45, 0.81}
\definecolor{carminered}{rgb}{1.0, 0.0, 0.22}
\definecolor{cobalt}{rgb}{0.0, 0.28, 0.67}
\definecolor{cornflowerblue}{rgb}{0.39, 0.58, 0.93}
\definecolor{falured}{rgb}{0.5, 0.09, 0.09}

\usepackage{enumitem}
\usepackage{bm}

\usepackage{tikz}

\newtheorem{theorem}{Theorem}[section]
\newtheorem{lemma}[theorem]{Lemma}
\newtheorem{proposition}[theorem]{Proposition}
\newtheorem{remark}[theorem]{Remark}
\newtheorem{definition}[theorem]{Definition}

\newcommand{\R}{\mathbb{R}}
\newcommand{\N}{\mathbb{N}}
\newcommand{\Om}{\Omega}

\numberwithin{equation}{section}

\numberwithin{bbb}{section}
\hypersetup{unicode=true}

\definecolor{lime}{HTML}{A6CE39}
\DeclareRobustCommand{\orcidicon}{%
  \begin{tikzpicture}
    \draw[lime, fill=lime] (0,0) circle [radius=0.16]
      node[white] {\scriptsize ID};
    \draw[white, fill=white] (-0.0625,0.095) circle [radius=0.007];
  \end{tikzpicture}%
  \hspace{-2mm}%
}
\foreach \x in {A, B, C}{%
  \expandafter\xdef\csname orcid\x\endcsname{%
    \noexpand\href{https://orcid.org/\csname orcidauthor\x\endcsname}{\noexpand\orcidicon}%
  }%
}

\newcommand{\myskip}[1]{}
\title[Brinkman--Biofilm--Nutrient]{Well-Posedness of a Coupled Brinkman--Biofilm--Nutrient System with Volume-Fraction Constraints}
\author[A. Alhammali \& M. Majdoub]{Azhar Alhammali\orcidB{} and Mohamed Majdoub\orcidA{}}

\address{Department of Mathematics, College of Science, Imam Abdulrahman Bin Faisal University, P. O. Box 1982, Dammam, Saudi Arabia\newline
Basic and Applied Scientific Research Center, Imam Abdulrahman Bin Faisal University, P.O. Box 1982, 31441, Dammam, Saudi Arabia}

\email[A. Alhammali]{{\tt aalhammali@iau.edu.sa}}
\email[M. Majdoub]{{\tt mmajdoub@iau.edu.sa}}
\email[M. Majdoub]{{\tt med.majdoub@gmail.com}}
\email[M. Majdoub]{{\tt mohamed.majdoub@fst.rnu.tn}}

\subjclass[2020]{Primary: 35Q35; 35K55; 76S05; Secondary: 35K86; 47H05; 76D07}

\keywords{
Brinkman equations,
biofilm growth,
hard volume-fraction constraint,
evolution variational inequalities,
reaction--diffusion--advection systems,
monotone operators,
weak solutions,
porous media flow
}

\begin{document}

\begin{abstract}
We investigate a coupled system of partial differential equations modeling the
interaction between Brinkman flow, biofilm evolution, and nutrient transport in a
porous medium. The model captures the mutual influence between the fluid velocity
and the biofilm through drag and diffusion coefficients that depend on the local
biofilm volume fraction. A hard constraint on the admissible range of the biofilm
fraction is incorporated through the subdifferential of an indicator functional,
which leads naturally to an evolution variational inequality formulation for the
biofilm dynamics.

Assuming standard coercivity, ellipticity, and growth conditions on the model
coefficients and reaction terms, we prove the global-in-time existence of weak
solutions. The analysis relies on a decomposition of the system into three
interconnected subproblems: the Brinkman equation with a fixed biofilm profile,
the constrained biofilm evolution treated through maximal monotone operator theory,
and the nutrient equation viewed as a semilinear parabolic problem. These
components are then coupled through a Leray--Schauder type fixed-point argument,
with the passage to the limit justified by Aubin--Lions and Simon compactness
results.

We further establish the nonnegativity of the nutrient concentration under a
natural quasi-positivity assumption on the reaction term. Finally, we provide
conditional uniqueness results for weak solutions in two spatial dimensions under
additional smallness assumptions.
\end{abstract}

\maketitle

\section{Introduction}
\label{sec:intro}
Biofilms are ubiquitous in porous-media settings (soil, aquifers, filters, medical
devices) and interact strongly with the ambient flow and nutrient availability.
At the pore and Darcy scales, biofilm accumulation can reduce permeability, alter
drag, and modify transport pathways, while nutrient advection--diffusion and reaction
govern growth and decay. Mathematical models that couple fluid flow with biomass
growth and nutrient consumption therefore lead naturally to nonlinear PDE systems
with feedback through transport coefficients and reaction terms.

In this work we study a coupled flow--biofilm--nutrient system in a bounded porous
domain $\Omega\subset\mathbb{R}^d$ ($d=2,3$) proposed by \cite{shin2021}. The flow is described by the Brinkman
equations, which combine viscous diffusion with a Darcy-type resistance (drag) term.
The resistance (or inverse permeability) depends on the biofilm volume fraction $\phi$ and
thus encodes the reduction of pore space and mobility as biomass accumulates. Because of the limited space in porous media, biofilm growth is subject to a volume constraint, where the maximum density is denoted by $\phi^*$.
This modeling choice is particularly attractive in heterogeneous media and in
regimes where a sharp interface between Stokes and Darcy regions is not prescribed
a priori. Here we focus on the continuous-level analysis for a streamlined
Brinkman--biofilm--nutrient formulation, emphasizing existence (and, under
additional conditions, qualitative properties such as positivity and conditional
uniqueness).

Several continuum models for biofilm development have been proposed.
In the widely studied Eberl--Parker--van~Loosdrecht model \cite{EberlParkerLoosdrecht},
the biofilm volume fraction obeys a degenerate reaction--diffusion equation whose
diffusion coefficient vanishes at $\phi=0$ and blows up as $\phi\to\phi^\ast$.
The degeneracy ensures compact support and the singularity near the packing limit
enforces $\phi<\phi^\ast$, but the resulting PDE is highly nonlinear.
In the Klapper--Dockery framework \cite{KlapperDockery}, multiphase mixture theory is
used, leading to coupled systems with pressure-like terms.
In contrast, the present model enforces the constraint $0\le \phi\le\phi^\ast$
through the subdifferential of an indicator functional, yielding a variational inequality
formulation that avoids singular diffusivities while providing a rigorous constraint
mechanism; see also Wanner--Gujer \cite{WannerGujer} and van~Loosdrecht et
al.~\cite{Loosdrecht2002} for background on multispecies and structural biofilm models.

A central feature of biofilm growth in pores is the limited available space.
Consequently, the biomass fraction $\phi$ is subject to the pointwise constraint
$0\le \phi \le \phi^\ast$ (maximum packing/maximum density). This is not a mild
technicality: it changes the evolution into a constrained dynamics, and standard
parabolic theory does not apply directly. Following the variational approach used
in constrained diffusion and phase-fraction models, we enforce the constraint via
the subdifferential of the indicator functional of the closed convex set
\[
\mathcal{K}=\{v\in L^2(\Omega): 0\le v\le \phi^\ast\ \text{a.e.}\}.
\]
This leads to an evolution variational inequality (EVI) (equivalently, an evolution
inclusion governed by a maximal monotone operator), which is robust under weak
convergence and is well adapted to compactness arguments.

The coupled system consists of three interacting components: an elliptic Brinkman
problem for $(u,p)$, whose coefficients depend on the biofilm variable $\phi$; a
constrained parabolic evolution equation for the biofilm, involving nonlinear
diffusion and transport by the velocity field $u$; and a semilinear parabolic
nutrient equation with advection driven by the same velocity.

The main analytical challenges arise from the presence of the constraint
$0\leq \phi\leq \phi^\ast$, which leads to the multivalued term
$\partial I_{\mathcal{K}}(\phi)$, the nonlinear coupling between the unknowns
through the coefficients $\alpha(\phi)$, $D_1(\phi)$, $D_2(\phi)$ and the reaction
terms, as well as the treatment of the transport terms
$u\cdot\nabla\phi$ and $u\cdot\nabla\xi$. These advection terms are naturally
handled in a weak formulation through duality pairings, typically in
$H^{-1}(\Omega)$, at the level of weak solutions.

\subsection*{Main contribution.}
Under standard coercivity and growth assumptions on the drag coefficient and
diffusion tensors, together with suitable hypotheses on the reaction terms, we
establish the global-in-time existence of weak solutions. The proof relies on a
combination of the following arguments:
\begin{itemize}
\item for a fixed biofilm profile $\phi$, the Brinkman subproblem is solved by
exploiting the coercivity of the associated bilinear form and applying the
Lax--Milgram theorem in the divergence-free setting;
\item the constrained biofilm evolution is formulated as a parabolic inclusion
associated with the maximal monotone operator generated by the subdifferential of
$I_{\mathcal K}$. This provides an evolutionary variational inequality (EVI)
framework together with the required energy estimates;
\item the nutrient equation is treated as a semilinear parabolic problem with
uniformly elliptic diffusion and transport terms interpreted in a weak duality
framework;
\item the three components are then coupled through a compactness-based fixed-point
procedure of Schauder/Leray--Schauder type. The convergence of the nonlinear
couplings is obtained by means of Aubin--Lions/Simon compactness arguments.
\end{itemize}

Furthermore, we establish a weak maximum-principle property for the nutrient
concentration under a standard quasi-positivity assumption on the reaction terms.
We also discuss conditional uniqueness of weak solutions in two space dimensions,
under additional smallness conditions and suitable Lipschitz assumptions on the
nonlinearities.

The remainder of the article is organized as follows.
Section~\ref{sec:model} introduces the mathematical model and the structural
assumptions. Section~\ref{sec:weak} presents the weak formulation, emphasizing the
evolutionary variational inequality structure of the biofilm equation.
Section~\ref{sec:WP} is devoted to the main well-posedness result and the derivation
of the fundamental a priori estimates. The proof is developed in
Section~\ref{sec:proofWP} by analyzing the three coupled subproblems and applying a
fixed-point argument, followed by a discussion of positivity and uniqueness
properties. Finally, Section~\ref{sec:sim} provides numerical simulations
illustrating the influence of the fluid flow and biofilm permeability on the
system behavior.

\section{Model and assumptions}
\label{sec:model}
Let $\Om\subset\R^d$ ($d=2,3$) be bounded with Lipschitz boundary and $T>0$.
Unknowns are: velocity $u:(0,T)\times\Om\to\R^d$, pressure $p$, biofilm volume fraction
$\phi$, and nutrient concentration $\xi$.

\subsection*{Flow (Brinkman)}
For a given $\phi$ we consider
\begin{align}
- \mu \Delta u + \alpha(\phi)\,u + \nabla p &= f \quad \text{in }\Om, \label{eq:Brinkman}\\
\nabla\cdot u &= 0 \quad \text{in }\Om, \label{eq:div}\\
u&=0 \quad \text{on }\partial\Om. \label{eq:uBC}
\end{align}
Here $\mu>0$ and $f\in L^2(0,T;H^{-1}(\Om)^d)$.

\subsection*{Biofilm (constrained evolution)}
\begin{equation}\label{eq:bio}
\partial_t \phi - \nabla\cdot(D_1(\phi)\nabla\phi) + \partial I_{[0,\phi^\ast]}(\phi)
= R_1(\phi,\xi) - u\cdot\nabla\phi
\quad \text{in }(0,T)\times\Om,
\end{equation}
with homogeneous Neumann boundary condition $\nabla \phi \cdot n=0$ on $\partial\Om$ and
initial datum $\phi(0)=\phi_0$.
Note that $u\cdot\nabla\phi=\nabla\cdot\left(\phi u\right)$ since $\nabla\cdot u=0$.

\subsection*{Nutrient}
\begin{equation}\label{eq:nut}
\partial_t \xi - \nabla\cdot(D_2(\phi)\nabla\xi)
= R_2(\phi,\xi) - u\cdot\nabla\xi
\quad \text{in }(0,T)\times\Om,
\end{equation}
with no-flux (homogeneous Neumann) boundary condition $\nabla\xi\cdot n=0$ on
$\partial\Om$ and $\xi(0)=\xi_0$.
For definiteness we develop the analysis under this no-flux condition; an
inhomogeneous Dirichlet inlet $\xi=\xi_D$ on a portion of $\partial\Om$ with
$\xi_D\ge 0$ (as used in the simulations of Section~\ref{sec:sim}) is incorporated by
a standard lifting and leaves the existence and nonnegativity statements unchanged,
since the boundary contribution of the transport term still vanishes on the no-slip
walls and the negative part $\xi^-$ vanishes on the inlet.

\subsection*{Structural assumptions}
Fix $\phi^\ast>0$ and set $K:=[0,\phi^\ast]$.
\begin{itemize}
\item[(A1)] $\alpha:K\to\R$ is continuous and uniformly coercive:
$0<\alpha_0\le \alpha(s)\le \alpha_1<\infty$ for all $s\in K$.
\item[(A2)] $D_1,D_2:K\to\R$ are continuous and uniformly elliptic:
$0<d_i\le D_i(s)\le \bar d_i<\infty$ for all $s\in K$, $i=1,2$.
\item[(A3)] (Reaction growth) $R_1,R_2:K\times\R\to\R$ are Carath\'eodory and satisfy
for some $C>0$,
\[
|R_1(s,z)| + |R_2(s,z)| \le C\bigl(1+|z|\bigr)
\quad\text{for all }(s,z)\in K\times\R.
\]
\item[(A4)] (Local Lipschitz) For each $M>0$ there exists $L_M>0$ such that for all
$s\in K$ and $|z_1|,|z_2|\le M$,
\[
|R_1(s,z_1)-R_1(s,z_2)| + |R_2(s,z_1)-R_2(s,z_2)|
\le L_M |z_1-z_2|.
\]
\item[(A5)] Data: $f\in L^2(0,T;H^{-1}(\Om)^d)$, $\phi_0\in L^2(\Om)$ with $0\le \phi_0\le\phi^\ast$
a.e., and $\xi_0\in L^2(\Om)$ (optionally $\xi_0\ge0$ a.e.\ for positivity).
\item[(A6)] (Quasi-positivity for the nutrient) For all $s\in K$, one has
\[
R_2(s,0)\ge 0,
\]
and for every $M>0$ there exists $C_M>0$ such that for all $s\in K$ and
$z\in[-M,M]$,
\[
R_2(s,z)\ge -C_M\, z^-,
\qquad z^-:=\max\{-z,0\}.
\]
\item[(A7)] (Global Lipschitz for uniqueness) There exists $L>0$ such that for all
$s_1,s_2\in K$ and all $z_1,z_2\in\R$,
\[
|R_1(s_1,z_1)-R_1(s_2,z_2)| + |R_2(s_1,z_1)-R_2(s_2,z_2)|
\le L\bigl(|s_1-s_2|+|z_1-z_2|\bigr).
\]
\item[(A8)] (Lipschitz drag for uniqueness) There exists $L_\alpha>0$ such that for all
$s_1,s_2\in K$,
\[
|\alpha(s_1)-\alpha(s_2)|\le L_\alpha |s_1-s_2|.
\]
\item[(A9)] (Lipschitz diffusivities for uniqueness) There exists $L_D>0$ such that
for all $s_1,s_2\in K$,
\[
|D_1(s_1)-D_1(s_2)| + |D_2(s_1)-D_2(s_2)| \le L_D\,|s_1-s_2|.
\]
\end{itemize}

\begin{remark}[Representative coefficients]\label{rem:coeff-examples}\rm\, 
The analysis uses only the abstract hypotheses \textup{(A1)--(A9)}; explicit formulas
for $\alpha,D_1,D_2$ are needed solely for the simulations.
A representative drag satisfying \textup{(A1)} is
\[
\alpha(\phi)=\frac{\mu}{k_b+k_0\,(1-\phi/\phi^\ast)^2},
\qquad k_b,k_0>0,
\]
used in Section~\ref{sec:sim}, for which $\mu/(k_b+k_0)\le\alpha(\phi)\le\mu/k_b$.
For the biofilm diffusivity one may consider the formula in \cite{shin2021}, which is a bounded, non-degenerate regularization of
the Eberl--Parker--van~Loosdrecht diffusivity \cite{EberlParkerLoosdrecht} (which is
itself degenerate at $\phi=0$ and singular at $\phi=\phi^\ast$), and for the nutrient
a constant or smoothly $\phi$-dependent $D_2$; both satisfy \textup{(A2)}.
\end{remark}

\begin{remark}\rm\,
Assumption \textup{(A6)} is standard in reaction--diffusion systems and guarantees
nonnegativity preservation in a weak sense; see, e.g., \cite{Lady,Showalter}.
A typical example is
\[
R_2(\phi,\xi)=S(\phi)-c(\phi)\,\xi
\quad\text{with}\quad S(\phi)\ge 0,\; c(\phi)\ge 0.
\]
\end{remark}

\begin{remark}\label{rem:indicator}\rm\,
The constraint term $\partial I_{[0,\phi^\ast]}(\phi)$ is the subdifferential of the
indicator functional of the closed convex set $\mathcal{K}$ (defined in Section~\ref{sec:intro}),
which yields an evolution variational inequality formulation; see, e.g.,
\cite{Barbu,Brz,KS}.
Throughout we use $K=[0,\phi^\ast]\subset\R$ for the interval of admissible values
and $\mathcal{K}\subset L^2(\Om)$ for the corresponding $L^2$-constraint set.
\end{remark}

\section{Weak formulation}
\label{sec:weak}
Let
\[
V:=H^1(\Om), \qquad V':=H^{-1}(\Om), \qquad
V_\sigma:=\{v\in H_0^1(\Om)^d:\nabla\cdot v=0\}.
\]
We interpret transport terms in $V'$:
for $u\in L^2(0,T;H_0^1(\Om)^d)$ and $\psi\in V$,
\[
\langle u\cdot\nabla\phi,\psi\rangle
:=-\int_\Om \phi\, u\cdot\nabla\psi\,dx,
\qquad
\langle u\cdot\nabla\xi,\psi\rangle
:=-\int_\Om \xi\, u\cdot\nabla\psi\,dx.
\]
In particular, if $\phi,\xi\in L^2(\Om)$ and $u\in H_0^1(\Om)^d$, then
$u\cdot\nabla\phi$ and $u\cdot\nabla\xi$ are well-defined elements of $V'$.

\begin{definition}\label{def:weak}
A triple $(u,\phi,\xi)$ is a weak solution of \eqref{eq:Brinkman}--\eqref{eq:nut} if:
\begin{itemize}
\item $u\in L^2(0,T;V_\sigma)$ satisfies \eqref{eq:Brinkman}--\eqref{eq:div} in the
usual weak sense for a.e.\ $t$ with coefficient $\alpha(\phi(t))$;
\item $\phi\in L^2(0,T;V)\cap H^1(0,T;V')$ with $\phi(t)\in \mathcal{K}$ for a.e.\ $t$,
and for a.e.\ $t\in(0,T)$,
\begin{equation*}
\begin{aligned}
\langle \partial_t\phi(t),\, v-\phi(t)\rangle
+ \int_\Om D_1(\phi(t))\nabla\phi(t)\cdot\nabla\bigl(v-\phi(t)\bigr)\,dx
&\ge \int_\Om R_1(\phi(t),\xi(t))\bigl(v-\phi(t)\bigr)\,dx \\
&\quad + \langle u(t)\cdot\nabla\phi(t),\, \phi(t)-v\rangle,
\end{aligned}
\end{equation*}
for all $v\in\mathcal{K}$ \textup{(EVI)};
\item $\xi\in L^2(0,T;V)\cap H^1(0,T;V')$ and for all $\psi\in V$ and a.e.\ $t$,
\[
\langle \partial_t\xi(t),\psi\rangle
+ \int_\Om D_2(\phi(t))\nabla\xi(t)\cdot\nabla\psi\,dx
= \int_\Om R_2(\phi(t),\xi(t))\psi\,dx
+ \langle u(t)\cdot\nabla\xi(t),\psi\rangle;
\]
\item $\phi(0)=\phi_0$ and $\xi(0)=\xi_0$ in $L^2(\Om)$.
\end{itemize}
\end{definition}

\section{Main well-posedness result}
\label{sec:WP}
\begin{theorem}[Existence, nonnegativity of $\xi$, and conditional uniqueness]\label{thm:exist}
Assume \textup{(A1)--(A5)}.
Then for every $T>0$ there exists at least one weak solution $(u,\phi,\xi)$ in the
sense of Definition~\ref{def:weak} such that
\[
u\in L^2(0,T;V_\sigma),\qquad
\phi,\xi \in L^2(0,T;V)\cap H^1(0,T;V'),
\qquad 0\le \phi\le\phi^\ast \ \text{a.e. in }(0,T)\times\Om,
\]
and the a priori bounds
\[
\|u\|_{L^2(0,T;H^1)}^2 + \|\phi\|_{L^\infty(0,T;L^2)}^2 + \|\xi\|_{L^\infty(0,T;L^2)}^2
+ \|\nabla\phi\|_{L^2(0,T;L^2)}^2 + \|\nabla\xi\|_{L^2(0,T;L^2)}^2 \le C
\]
hold with $C=C(T,\|f\|_{L^2(0,T;V_\sigma')},\|\phi_0\|_{L^2},\|\xi_0\|_{L^2})$.

Moreover, if \textup{(A6)} holds and $\xi_0\ge 0$ a.e.\ in $\Om$, then
\[
\xi(t,x)\ge 0 \quad\text{for a.e. }(t,x)\in(0,T)\times\Om.
\]

Finally, assume in addition \textup{(A7)--(A9)}, that $f\in L^\infty(0,T;V_\sigma')$,
and let $d=2$.
There exists a constant
$\varepsilon=\varepsilon(\mu,\alpha_0,d_i,L,L_\alpha,L_D,\Om)>0$ such that
if either
\[
T\le \varepsilon
\qquad\text{or}\qquad
\|f\|_{L^2(0,T;V_\sigma')} \le \varepsilon,
\]
then the weak solution is unique on $(0,T)$.
\end{theorem}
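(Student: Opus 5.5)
The proof splits into the three claims of Theorem~\ref{thm:exist}: existence, nonnegativity of $\xi$, and conditional uniqueness. For existence I would use a decoupling and fixed-point scheme.

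\textbf{Existence: the fixed-point map.} Fix $\bar\phi\in L^2(0,T;L^2(\Om))$ with values in $K$ (truncate if necessary) and $\bar\xi\in L^2(0,T;L^2(\Om))$. The map is built in three stages.
\begin{enumerate}
\item[(i)] \emph{Brinkman step.} For a.e.\ $t$, solve the Brinkman problem with coefficient $\alpha(\bar\phi(t))$ by Lax--Milgram on $V_\sigma$. Coercivity comes from (A1) and $\mu>0$, and gives $\|u(t)\|_{H^1}\le \mu^{-1}\|f(t)\|_{V_\sigma'}$. Measurability in $t$ follows from continuity of the solution map, and the pressure is recovered by de~Rham.
\item[(ii)] \emph{Biofilm step.} With $u$ and $\bar\xi$ fixed, solve the EVI with frozen diffusion $D_1(\bar\phi)$, source $R_1(\bar\phi,\bar\xi)$ and transport $u\cdot\nabla\phi$. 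This is a perturbation of $\partial\Phi$, where $\Phi(v)=\int_\Om D_1\ldots$ plus $I_{\mathcal K}$. Rather than rely directly on Brezis' theory, which requires time-independent operators, I would penalize: replace $\partial I_{[0,\phi^\ast]}$ by the Yosida approximation $\varepsilon^{-1}\bigl((\phi-\phi^\ast)^+-\phi^-\bigr)$. This gives a linear-plus-monotone parabolic equation, solvable by Galerkin. Testing with $\phi$ uses $\langle u\cdot\nabla\phi,\phi\rangle=0$ (since $\nabla\cdot u=0$ and $u|_{\partial\Om}=0$) to get bounds uniform in $\varepsilon$ in $L^\infty L^2\cap L^2H^1$. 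Testing with the penalty term gives $\varepsilon^{-1}\|\mathrm{dist}(\phi,K)\|^2_{L^2L^2}\le C$. For the time derivative, the transport term is bounded in $V'$ only by $\|u\|_{L^4}\|\phi\|_{L^4}$; since we will know $0\le\phi\le\phi^\ast$, it is in fact bounded by $\phi^\ast\|u\|_{L^2}$, which suffices. Letting $\varepsilon\to0$ via Aubin--Lions and monotonicity (Minty) yields the EVI and $\phi\in\mathcal K$.
\item[(iii)] \emph{Nutrient step.} Solve the linear parabolic nutrient equation with $D_2(\bar\phi)$ and source $R_2(\bar\phi,\bar\xi)$. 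This gives the analogous energy bound. For the time derivative, $\|u\cdot\nabla\xi\|_{V'}\le\|u\|_{L^4}\|\xi\|_{L^4}$ is in $L^1V'$ in $d=3$. To get $H^1(0,T;V')$ I may need to weaken to $W^{1,1}$ or argue via $L^\infty$ bounds, so I would note the limitation and use the Simon lemma with $W^{1,1}(0,T;V')$ compactness.
\end{enumerate}

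\textbf{Existence: closing the argument.} The a priori bounds come from Gronwall using (A3). Define $\mathcal T(\bar\phi,\bar\xi)=(\phi,\xi)$ on a large ball of $L^2(0,T;L^2)^2$. Compactness of $\mathcal T$ follows from Aubin--Lions. For continuity, take $\bar\phi_n\to\bar\phi$ strongly. Then $\alpha(\bar\phi_n)\to\alpha(\bar\phi)$ a.e. with bounded values, so $u_n\to u$ strongly in $L^2H^1$ by a difference estimate. Similarly $D_i(\bar\phi_n)\nabla\psi\to D_i(\bar\phi)\nabla\psi$ by dominated convergence. Passing to the limit in the EVI requires the $\limsup$ of $\int D_1\nabla\phi_n\cdot\nabla\phi_n$, which is handled by weak lower semicontinuity of the convex part, since the inequality direction is favorable. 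Uniqueness of each subproblem (linear, or monotone for the EVI) gives convergence of the whole sequence. Schauder then yields a fixed point, which is a weak solution satisfying the stated bounds.

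\textbf{Nonnegativity.} Test the nutrient equation with $-\xi^-\in V$. The transport term vanishes because $\int\xi u\cdot\nabla\xi^-=-\tfrac12\int u\cdot\nabla(\xi^-)^2=0$. For the reaction term, (A6) gives $-\int R_2\xi^-\le C_M\int(\xi^-)^2$ on $\{|\xi|\le M\}$. Global boundedness of $\xi$ is not available, so I would use the linear growth (A3) together with $R_2(s,0)\ge0$, or first truncate $R_2$ at level $M$ in the construction. Then
\[
\tfrac{d}{dt}\tfrac12\|\xi^-\|^2+d_2\|\nabla\xi^-\|^2\le C\|\xi^-\|^2,
\]
and Gronwall with $\xi_0^-=0$ concludes. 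Handling this truncation cleanly is a technical point.

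\textbf{Uniqueness ($d=2$).} Take two solutions and form the differences $w=u_1-u_2$, $\eta=\phi_1-\phi_2$, $\zeta=\xi_1-\xi_2$.
\begin{itemize}
\item The Brinkman difference gives $\|w\|_{H^1}\le C L_\alpha\|\eta\, u_2\|_{H^{-1}}\le C\|\eta\|_{L^2}^{1/2}\|\eta\|_{H^1}^{1/2}\|u_2\|_{H^1}$, by Ladyzhenskaya in 2D.
\item For the EVIs, choose $v=\phi_2$ in the first and $v=\phi_1$ in the second and add; the indicator term then drops. The diffusion difference produces $\int (D_1(\phi_1)-D_1(\phi_2))\nabla\phi_2\cdot\nabla\eta$, which is bounded by $L_D\|\eta\|_{L^4}\|\nabla\phi_2\|_{L^4}\|\nabla\eta\|$.
\end{itemize}
This term is the \textbf{main obstacle}. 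It requires $\nabla\phi_2\in L^4$, hence $L^2$-in-time $H^2$-type regularity of $\phi_2$, which is not available for an EVI weak solution. My first attempt would be a Gajewski-type or $H^{-1}$/Kirchhoff-transform approach: write $D_1(\phi)\nabla\phi=\nabla\mathcal D_1(\phi)$ and estimate $\eta$ in a dual norm. Failing that, I would use the smallness of $T$ or $\|f\|$ to absorb terms. Concretely, the transport term $\int \eta\, w\cdot\nabla\phi_2$ is bounded via 2D Ladyzhenskaya by $\|w\|_{H^1}\|\eta\|^{1/2}\|\eta\|_{H^1}^{1/2}\|\nabla\phi_2\|$, and with the $w$-bound this is controlled by $\|u_2\|_{H^1}\|\nabla\phi_2\|$ times $\|\eta\|\,\|\eta\|_{H^1}$. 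Young's inequality then leaves a Gronwall coefficient involving $\|f\|_{V'}^2\|\nabla\phi_2\|^2$, or a prefactor that is small when $\|f\|$ or $T$ is small. This explains the smallness hypothesis. The $\zeta$ equation is treated the same way using (A7) and (A9). Gronwall then gives $\eta=\zeta=0$, and hence $w=0$.
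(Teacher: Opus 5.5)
\textbf{Existence and nonnegativity.} These follow the paper's scheme: Lax--Milgram for the flow, a constrained biofilm step, a linear nutrient step, Schauder with Aubin--Lions compactness, and uniqueness of the subproblems for continuity. Replacing the paper's appeal to abstract maximal-monotone theory by a Yosida penalization is a sound, more self-contained choice, precisely because $D_1(\bar\phi(t))$ and $u(t)$ depend on time.

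Your caveat on $\partial_t\xi$ is well founded; the paper simply asserts $H^1(0,T;V')$. From $u\in L^2(0,T;H^1)$ and the energy bounds one only gets $\xi u\in L^{4/3}(0,T;L^2)$, hence $\partial_t\xi\in L^{4/3}(0,T;V')$. So with your fix the stated time regularity is not reached. Moreover, the chain rule you use when testing with $\xi$ or $\xi^-$ must then be justified on an approximation rather than on the limit.

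Two smaller points. First, you freeze $\bar\xi$ in $R_2$, so the $L^2(0,T;L^2)$ bound on $\xi$ grows like $CT\|\bar\xi\|_{L^2(0,T;L^2)}$. A plain ball is then invariant only for small $T$; use an $e^{-\lambda t}$-weighted norm, or solve the nutrient step with $R_2(\bar\phi,\xi)$. Second, the truncation in the positivity proof is not a real difficulty. Use \textup{(A6)} with $M=1$ on $\{-1\le\xi<0\}$, and \textup{(A3)} on $\{\xi<-1\}$, where $(1+\xi^-)\xi^-\le 2(\xi^-)^2$. Together these give $-\int_\Om R_2(\phi,\xi)\,\xi^-\,dx\le C\|\xi^-\|_{L^2}^2$ directly.

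\textbf{The genuine gap is in uniqueness.} It sits at the term you yourself flag, $\int_\Om(D_1(\phi_1)-D_1(\phi_2))\nabla\phi_2\cdot\nabla\eta\,dx$, and at its nutrient analogue $\int_\Om(D_2(\phi_1)-D_2(\phi_2))\nabla\xi_2\cdot\nabla\zeta\,dx$, which you say is ``treated the same way''. Your fallback never touches these terms.

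Your concrete computation concerns the transport term, which is harmless anyway: $\int_\Om\eta\,w\cdot\nabla\phi_2\,dx=-\int_\Om\phi_2\,w\cdot\nabla\eta\,dx\le\phi^\ast\|w\|_{L^2}\|\nabla\eta\|_{L^2}$, and $\|w\|_{H^1}\le C\|u_2\|_{H^1}\|\eta\|_{L^2}$. So it explains no smallness hypothesis.

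Smallness of $T$ or of $\|f\|$ cannot help with the diffusion cross-terms either. The forcing $f$ does not enter them, and the difficulty is not the size of a Gronwall weight. Rather, with $\nabla\phi_2$ only in $L^2$, these terms are not controlled by the energy norms of $\eta$ at all (see the scaling example below).

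The Kirchhoff/$H^{-1}$ idea fails as stated. For multipliers $\beta_i\in\partial I_{\mathcal K}(\phi_i)$, monotonicity of $\partial I_{\mathcal K}$ controls $\int_\Om(\beta_1-\beta_2)\,\eta\,dx$, not the pairing of $\beta_1-\beta_2$ with the inverse Neumann Laplacian of $\eta$. In addition, $D_2(\phi)\nabla\xi$ has no Kirchhoff structure.

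Nor can you borrow the paper's treatment. In your notation it asserts, ``by Ladyzhenskaya and Young'', that $L_D\int_\Om|\eta|\,|\nabla\phi_2|\,|\nabla\eta|\,dx\le\kappa\|\nabla\eta\|_{L^2}^2+C_\kappa\|\nabla\phi_2\|_{L^2}^2\|\eta\|_{L^2}^2$. Take $\eta=a(\lambda x)$ and $\phi_2=\phi^\ast/2+\epsilon h(\lambda x)$ in two dimensions. The left side and the $\kappa$-term are scale invariant, while the $C_\kappa$-term decays like $\lambda^{-2}$, so the bound fails once $\kappa$ is small. Ladyzhenskaya applied to $\eta$ still leaves $\|\nabla\phi_2\|_{L^4}$.

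Closing the argument requires an extra regularity input, for example $\nabla\phi_2,\nabla\xi_2\in L^4((0,T)\times\Om)$. Ladyzhenskaya and Young then give the integrable Gronwall weight $\|\nabla\phi_2\|_{L^4}^4+\|\nabla\xi_2\|_{L^4}^4$. Such regularity, e.g.\ via $L^\infty(0,T;H^1)\cap L^2(0,T;H^2)$ bounds, needs hypotheses beyond those stated, such as $H^1$ initial data and a smoother domain. As it stands, neither your argument nor the paper's proves the uniqueness claim.
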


\begin{remark}[Uniqueness]\rm\,
Uniqueness generally requires additional assumptions (e.g.\ Lipschitz dependence in
both variables, smallness of data, or stronger regularity of the transport velocity).
The Lipschitz condition \textup{(A9)} on $D_1,D_2$ is needed to control the cross-terms
$\int_\Om (D_i(\phi_1)-D_i(\phi_2))\nabla\phi_2\cdot\nabla(\phi_1-\phi_2)\,dx$ that arise when
subtracting the equations for two solutions; see the proof of
Proposition~\ref{prop:uniq}, where we abbreviate $\delta\phi:=\phi_1-\phi_2$.
Further refinements (e.g.\ unconditional uniqueness under stronger regularity)
are left for future work; see, for instance, the framework in
\cite{Showalter,Temam} for related coupled parabolic systems.
\end{remark}

\section{Proof of Theorem~\ref{thm:exist}}
\label{sec:proofWP}
This section is devoted to the proof of Theorem~\ref{thm:exist}. First, we state a weak maximum principle needed in the sequel.
\begin{lemma}[Weak maximum principle for the nutrient]\label{lem:xi-nonneg}
Assume \textup{(A1)--(A6)} and $\xi_0\ge 0$ a.e.\ in $\Om$.
Let $(u,\phi,\xi)$ be a weak solution in the sense of Definition~\ref{def:weak}.
Then $\xi\ge 0$ a.e.\ in $(0,T)\times\Om$.
\end{lemma}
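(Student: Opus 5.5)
The plan is a Stampacchia-type argument: test the nutrient equation with the negative part of $\xi$ and close a Gronwall inequality for $\|\xi^-(t)\|_{L^2}^2$. Since $\xi\in L^2(0,T;V)\cap H^1(0,T;V')$, we have $\xi^-=\max\{-\xi,0\}\in L^2(0,T;V)$ with $\nabla\xi^-=-\chi_{\{\xi<0\}}\nabla\xi$. The Lions--Magenes chain rule for the convex functional $v\mapsto\frac12\|v^-\|_{L^2}^2$ then gives, for a.e.\ $t$,
\[
\langle\partial_t\xi(t),-\xi^-(t)\rangle=\tfrac12\tfrac{d}{dt}\|\xi^-(t)\|_{L^2}^2 .
\]
I will take $\psi=-\xi^-(t)$ in the weak nutrient equation of Definition~\ref{def:weak} and handle the three remaining terms one at a time.

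\emph{Diffusion.} By (A2),
\[
\int_\Om D_2(\phi)\nabla\xi\cdot\nabla(-\xi^-)\,dx=\int_\Om D_2(\phi)|\nabla\xi^-|^2\,dx\ge d_2\|\nabla\xi^-\|_{L^2}^2\ge 0 .
\]

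\emph{Transport.} With the duality convention of Section~\ref{sec:weak},
\[
\langle u\cdot\nabla\xi,-\xi^-\rangle=\int_\Om\xi\,u\cdot\nabla\xi^-\,dx=-\int_\Om\xi^-\,u\cdot\nabla\xi^-\,dx=-\tfrac12\int_\Om u\cdot\nabla|\xi^-|^2\,dx .
\]
This vanishes because $\nabla\cdot u=0$ and $u\in H^1_0$. The integrand is integrable since $u,\xi^-\in L^6$ and $\nabla\xi^-\in L^2$ for $d\le3$. To justify the integration by parts rigorously, I will approximate $u$ in $V_\sigma$ by smooth compactly supported divergence-free fields and pass to the limit using the same H\"older bound.

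\emph{Reaction.} This is the step I expect to be the main obstacle. (A6) only gives $R_2(s,z)\ge -C_M z^-$ for $|z|\le M$, whereas $\xi$ is not known to be bounded. I will upgrade (A6) to a global one-sided bound using the growth condition (A3). Take $M=1$. For $z\in[-1,0)$, (A6) gives $R_2(s,z)\ge -C_1 z^-$. For $z<-1$, (A3) gives $R_2(s,z)\ge -C(1+|z|)\ge -2C|z|=-2C z^-$. Hence, for all $s\in K$ and $z\in\R$,
\[
R_2(s,z)\,\bigl(-z^-\bigr)\le C'(z^-)^2,\qquad C':=\max\{C_1,2C\};
\]
for $z\ge0$ the left side is zero. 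Consequently
\[
\int_\Om R_2(\phi,\xi)(-\xi^-)\,dx\le C'\|\xi^-\|_{L^2}^2 .
\]

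\emph{Conclusion.} Combining the three estimates yields
\[
\tfrac12\tfrac{d}{dt}\|\xi^-\|_{L^2}^2+d_2\|\nabla\xi^-\|_{L^2}^2\le C'\|\xi^-\|_{L^2}^2\quad\text{for a.e. }t .
\]
Because $t\mapsto\xi(t)$ is continuous into $L^2$ and $v\mapsto v^-$ is Lipschitz on $L^2$, the function $t\mapsto\|\xi^-(t)\|_{L^2}^2$ is absolutely continuous. It vanishes at $t=0$ since $\xi_0\ge0$. Gronwall's lemma then gives $\xi^-\equiv0$, that is, $\xi\ge0$ a.e.\ in $(0,T)\times\Om$.
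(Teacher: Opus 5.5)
Your proof is correct and has the same skeleton as the paper's: test with the negative part, use $\nabla\cdot u=0$ and $u|_{\partial\Om}=0$ to kill the transport term, and close with Gr\"onwall from $\xi^-(0)=0$. The real difference is the reaction term. The paper truncates at level $n$ and uses (A6) with $M=n$ on $\{|\xi|\le n\}$. It handles $\{|\xi|>n\}$ by Chebyshev's inequality and the bound $\xi\in L^\infty(0,T;L^2)$, calling that contribution ``arbitrarily small''. As written this step is not rigorous. An additive remainder that is merely small, rather than bounded by a multiple of $\|\xi^-\|_{L^2}^2$, does not force $\xi^-\equiv 0$ through Gr\"onwall, and $C_n$ may grow with $n$. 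Your observation settles this cleanly: (A6) at the single level $M=1$, combined with (A3) on $\{z<-1\}$, already gives the global one-sided bound $R_2(s,z)\ge -\max\{C_1,2C\}\,z^-$. The constant is uniform, and you need no a priori bound on $\xi$ and no Chebyshev step. In fact the paper's remainder lives only on $\{\xi<-n\}$, where $1+|\xi|\le 2\xi^-$, which is exactly your point. Testing with $\psi=-\xi^-$ also gives you the correct signs directly. The paper tests with $\xi^-$ but writes $+\frac12\frac{d}{dt}\|\xi^-\|_{L^2}^2$ and $+\int_\Om D_2|\nabla\xi^-|^2\,dx$ on the left with $+\int_\Om R_2\,\xi^-\,dx$ on the right. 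It then reads (A6) as an upper bound on $R_2\xi^-$; these two sign slips cancel. Your justification of the transport term ($L^6$ integrability for $d\le 3$, density of smooth solenoidal fields in $V_\sigma$) supplies what the paper leaves implicit. One small correction: continuity of $t\mapsto\xi(t)$ in $L^2$ and the Lipschitz property of $v\mapsto v^-$ give only continuity of $t\mapsto\|\xi^-(t)\|_{L^2}^2$. Its absolute continuity, which is what Gr\"onwall needs, comes from the integrated form of the chain rule you invoked at the start.
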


\begin{proof}
Set $\xi^-:=\max\{-\xi,0\}\in L^2(0,T;V)$.
Taking $\psi=\xi^-(t)$ as a test function in the weak formulation of the nutrient
equation \eqref{eq:nut} (justified by standard truncation arguments; see below), we obtain
for a.e.\ $t\in(0,T)$,
\begin{align*}
\frac12\frac{d}{dt}\|\xi^-(t)\|_{L^2(\Om)}^2
+ \int_\Om D_2(\phi(t))|\nabla\xi^-(t)|^2\,dx
&= \int_\Om R_2(\phi(t),\xi(t))\,\xi^-(t)\,dx
+ \langle u(t)\cdot\nabla\xi(t),\xi^-(t)\rangle.
\end{align*}
The transport term vanishes: since $\nabla\cdot u(t)=0$ and $u(t)|_{\partial\Om}=0$, we obtain
\begin{eqnarray*}
\langle u(t)\cdot\nabla\xi(t),\xi^-(t)\rangle
&=& -\int_\Om \xi(t)\,u(t)\cdot\nabla\xi^-(t)\,dx\\
&=& \int_\Om \xi^-(t)\,u(t)\cdot\nabla\xi^-(t)\,dx\\
&=& \frac12\int_\Om u(t)\cdot\nabla|\xi^-(t)|^2\,dx\\
&=&0.    
\end{eqnarray*}

Using (A2) we have $D_2(\phi)\ge d_2>0$ and thus
\begin{equation}\label{eq:xi-neg-ineq}
\frac12\frac{d}{dt}\|\xi^-(t)\|_{L^2}^2
+ d_2\|\nabla\xi^-(t)\|_{L^2}^2
\le \int_\Om R_2(\phi(t),\xi(t))\,\xi^-(t)\,dx.
\end{equation}

We now detail the truncation argument for the right-hand side.
For $n\in\N$, define $\xi_n:=\max\{\min\{\xi,n\},-n\}$,
so that $|\xi_n|\le n$ a.e.\ and $\xi_n\to\xi$ in $L^2(0,T;V)$.
On the set $\{|\xi|\le n\}$ we have $|\xi|=|\xi_n|\le n$, so~(A6) gives
$R_2(\phi,\xi)\,\xi^- \le C_n\,|\xi^-|^2$.
On the complementary set $\{|\xi|>n\}$, the linear growth bound~(A3) gives
$|R_2(\phi,\xi)|\le C(1+|\xi|)$, whence
$|R_2(\phi,\xi)|\,|\xi^-|\le C(1+|\xi|)\,|\xi^-|\le C(1+|\xi|)^2$,
and the contribution from $\{|\xi|>n\}$ can be made arbitrarily small
for $n$ large by Chebyshev's inequality \footnote{In its simplest form, Chebyshev's inequality states that, for any $\lambda>0$,
\[
\left|\{x\in\Omega:\ |f(x)|\geq \lambda\}\right|
\leq
\frac{1}{\lambda}\int_{\Omega}|f(x)|\,dx .
\]
Here $|\cdot|$ denotes the Lebesgue measure.} and the a priori bound
$\xi\in L^\infty(0,T;L^2(\Om))$ (available from the existence energy estimates).
In total, for some constant $C>0$ depending on
$\|\xi\|_{L^\infty(0,T;L^2)}$,
\[
\int_\Om R_2(\phi(t),\xi(t))\,\xi^-(t)\,dx
\le C\,\|\xi^-(t)\|_{L^2}^2
\quad\text{for a.e.\ }t\in(0,T).
\]
Therefore, from \eqref{eq:xi-neg-ineq},
\[
\frac{d}{dt}\|\xi^-(t)\|_{L^2}^2 \le 2C\,\|\xi^-(t)\|_{L^2}^2.
\]
Since $\xi_0\ge 0$, we have $\xi^-(0)=0$, and Gr\"onwall's inequality \cite[p. 664]{Evans} gives
$\xi^-(t)\equiv 0$ for all $t\in[0,T]$. Hence $\xi\ge 0$ a.e.
\end{proof}

We now give the main steps of the proof, emphasizing the time-parametrized nature of the
Brinkman subproblem and the well-definedness of the transport terms in $V'$.

\subsection{Step 1: Brinkman subproblem (frozen biofilm)}
Fix $\bar\phi\in L^\infty(\Om)$ with $0\le \bar\phi\le\phi^\ast$.
By (A1), the bilinear form on $V_\sigma$,
\[
a_{\bar\phi}(u,v):=\mu\int_\Om \nabla u:\nabla v\,dx + \int_\Om \alpha(\bar\phi)\,u\cdot v\,dx,
\]
is continuous and coercive. Hence, by Lax--Milgram theorem~\cite[Corollary 5.8, p. 140]{Brezis}, there exists a unique
$u=S_{\mathrm{flow}}(\bar\phi)\in V_\sigma$ solving \eqref{eq:Brinkman}--\eqref{eq:div} with
$\alpha(\bar\phi)$ and satisfying
\begin{equation}\label{eq:u_bound}
\|u\|_{H^1(\Om)} \le C \|f(t)\|_{V_\sigma'}.
\end{equation}
In the coupled problem, this construction is applied for a.e.\ $t\in(0,T)$ with
$\bar\phi=\bar\phi(t)$ and forcing $f(t)$,
which yields $u\in L^2(0,T;V_\sigma)$ and the time-integrated estimate
$\|u\|_{L^2(0,T;H^1)} \le C\|f\|_{L^2(0,T;V_\sigma')}$.

\subsection{Step 2: Constrained biofilm evolution for given \texorpdfstring{$(\bar u,\bar\xi,\bar\phi)$}{(ubar, xibar, phibar)}}
Fix $\bar u\in L^2(0,T;V_\sigma)$, $\bar\xi\in L^2(0,T;L^2(\Om))$, and
$\bar\phi\in L^\infty((0,T)\times\Om)$ with values in $K$.
Define
\[
\mathcal{A}_{\bar\phi}(w):=-\nabla\cdot(D_1(\bar\phi)\nabla w)
\quad \text{with Neumann b.c.}
\]
and the maximal monotone operator $\partial I_{\mathcal{K}}$ in $L^2(\Om)$.
Then the biofilm subproblem (with all nonlinear coefficients and reaction terms
evaluated at the frozen data $(\bar\phi,\bar\xi)$) reads in $V'$ as the
evolution inclusion
\begin{equation}\label{eq:evol_incl}
\partial_t\phi + \mathcal{A}_{\bar\phi}(\phi) + \bar u\cdot\nabla\phi
+ \partial I_{\mathcal{K}}(\phi)
\ni R_1(\bar\phi,\bar\xi).
\end{equation}
The right-hand side $R_1(\bar\phi,\bar\xi)$ belongs to $L^2(0,T;L^2)$
by (A3) since $\bar\phi\in K$ a.e.\ and $\bar\xi\in L^2$, and is
independent of the unknown $\phi$.
The operator $\mathcal{A}_{\bar\phi}+\bar u\cdot\nabla$ on the left-hand side is
\emph{linear} in $\phi$: the diffusion part $\mathcal{A}_{\bar\phi}$ is coercive
with constant $d_1>0$ by (A2), and the transport part $\bar u\cdot\nabla\phi$
is skew-symmetric in the $V'$--$V$ duality since
$\langle \bar u\cdot\nabla\phi,\phi\rangle=0$ (cf.\ Section~3).
Therefore, the sum $\mathcal{A}_{\bar\phi}+\bar u\cdot\nabla$ is a bounded linear
coercive operator from $V$ to $V'$, and $\partial I_{\mathcal{K}}$ is maximal monotone.
By standard theory for evolution inclusions governed by sums of linear coercive
and maximal monotone operators
(see \cite[Ch.~III]{Brz}, \cite[Ch.~IV]{Barbu}, and \cite[Ch.~V]{Showalter}),
there exists a unique solution
$\phi\in L^2(0,T;V)\cap H^1(0,T;V')$ with $\phi(t)\in\mathcal{K}$ a.e.

\begin{remark}\rm\,
At the fixed point (Step~4 below), $\bar\phi=\phi$ and $\bar\xi=\xi$, so
\eqref{eq:evol_incl} reduces to the original biofilm equation \eqref{eq:bio}.
Freezing the reaction term $R_1$ at $(\bar\phi,\bar\xi)$
rather than evaluating it at the unknown $\phi$ ensures that the subproblem
is a standard evolution inclusion with given data and avoids
the additional difficulty of a non-monotone $\phi$-dependent perturbation on
the right-hand side.
\end{remark}

\subsection{Step 3: Nutrient equation for given $(\bar u,\bar\phi)$}
Given $\bar u\in L^2(0,T;V_\sigma)$ and $\bar\phi\in L^\infty((0,T)\times\Om)$ with values in $K$,
the nutrient equation \eqref{eq:nut} is a semilinear parabolic equation with uniformly elliptic
diffusion $D_2(\bar\phi)$ and transport in $V'$.
Using standard Galerkin or monotonicity methods (cf.\ \cite{Lady,Temam,Showalter}),
one obtains a solution $\xi\in L^2(0,T;V)\cap H^1(0,T;V')$.

\subsection{Step 4: Fixed point and compactness}
The a priori energy estimates (testing the EVI for $\phi$ with $v=0$ and $v=\phi^\ast$,
and the nutrient equation with $\psi=\xi$, combined with
the flow bound \eqref{eq:u_bound}) give, for any solution $(\phi,\xi)$ of the decoupled
system in Steps~1--3 with input $(\bar\phi,\bar\xi)$,
\begin{equation}\label{eq:apriori_ball}
\|\phi\|_{L^2(0,T;V)\cap H^1(0,T;V')}
+ \|\xi\|_{L^2(0,T;V)\cap H^1(0,T;V')} \le R,
\end{equation}
where $R=R(T,\|f\|_{L^2(0,T;V_\sigma')},\|\phi_0\|_{L^2},\|\xi_0\|_{L^2})>0$
is independent of $(\bar\phi,\bar\xi)$.
Indeed, the constraint $\phi\in\mathcal{K}$ provides $L^\infty$-control on $\phi$,
and (A3) controls the reaction terms linearly in $\|\xi\|_{L^2}$, from which
a Gr\"onwall argument closes the bound.

Define the closed bounded convex set
\[
\mathcal{X}_R:=\Bigl\{(\phi,\xi)\in L^2(0,T;L^2(\Om))^2:\
\phi(t)\in\mathcal{K}\ \text{a.e.},\
\|\phi\|_{L^2(0,T;V)\cap H^1(0,T;V')}
+ \|\xi\|_{L^2(0,T;V)\cap H^1(0,T;V')} \le R
\Bigr\},
\]
equipped with the $L^2(0,T;L^2(\Om))^2$ topology, and the map
$\mathcal{T}:\mathcal{X}_R\to\mathcal{X}_R$ by
\[
\mathcal{T}(\bar\phi,\bar\xi):=(\phi,\xi),
\]
where $u=S_{\mathrm{flow}}(\bar\phi(t))$ for a.e.\ $t$, then $\phi$ solves
\eqref{eq:evol_incl}, and $\xi$ solves \eqref{eq:nut} with $(u,\bar\phi)$.
By \eqref{eq:apriori_ball}, $\mathcal{T}$ maps $\mathcal{X}_R$ into itself.

By Aubin--Lions and Simon's compactness criterion \cite{Simon},
bounded sets in $L^2(0,T;V)\cap H^1(0,T;V')$ are relatively compact in
$L^2(0,T;L^2(\Om))$; hence $\mathcal{T}$ is compact.

\begin{lemma}[Continuity of the fixed-point map]\label{lem:Tcont}
The map $\mathcal{T}:\mathcal{X}_R\to\mathcal{X}_R$ is continuous in the
$L^2(0,T;L^2(\Om))^2$ topology.
\end{lemma}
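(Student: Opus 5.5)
The argument is sequential: take a sequence $(\bar\phi_n,\bar\xi_n)\to(\bar\phi,\bar\xi)$ in $\mathcal{X}_R$ for the $L^2(0,T;L^2(\Om))^2$ topology. Set $(\phi_n,\xi_n):=\mathcal{T}(\bar\phi_n,\bar\xi_n)$ and $u_n:=S_{\mathrm{flow}}(\bar\phi_n)$. We show that every subsequence has a further subsequence along which $(\phi_n,\xi_n)\to\mathcal{T}(\bar\phi,\bar\xi)$. Since the limit is unique, this gives convergence of the whole sequence.

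\textbf{Step (a): the flow.} Passing to a subsequence, $\bar\phi_n\to\bar\phi$ a.e. in $(0,T)\times\Om$. By (A1), $\alpha(\bar\phi_n)\to\alpha(\bar\phi)$ a.e., and the sequence is bounded by $\alpha_1$. Let $u:=S_{\mathrm{flow}}(\bar\phi)$ and subtract the two Brinkman problems. Testing with $u_n-u$ gives, for a.e. $t$,
\[
\mu\|\nabla(u_n-u)\|_{L^2}^2+\alpha_0\|u_n-u\|_{L^2}^2\le \int_\Om |\alpha(\bar\phi_n)-\alpha(\bar\phi)|\,|u|\,|u_n-u|\,dx .
\]
By Cauchy--Schwarz and Young, the left side is bounded by $C\int_\Om|\alpha(\bar\phi_n)-\alpha(\bar\phi)|^2|u|^2\,dx$. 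We integrate in time. The integrand is dominated by $4\alpha_1^2|u|^2\in L^1((0,T)\times\Om)$ and tends to $0$ a.e. Dominated convergence then gives $u_n\to u$ strongly in $L^2(0,T;V_\sigma)$.

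\textbf{Step (b): compactness and identification of limits.} The pairs $(\phi_n,\xi_n)$ are bounded in $L^2(0,T;V)\cap H^1(0,T;V')$ by \eqref{eq:apriori_ball}. By Aubin--Lions, along a further subsequence we obtain:
\begin{itemize}
\item weak convergence in $L^2(0,T;V)$ and in $H^1(0,T;V')$;
\item strong convergence in $L^2(0,T;L^2)$ and a.e.;
\item convergence in $C([0,T];V')$, so the initial data pass to the limit.
\end{itemize}
Call the limits $(\phi,\xi)$. Since $\mathcal{K}$ is closed and convex, $\phi(t)\in\mathcal{K}$.

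For the linear nutrient problem, the terms pass to the limit as follows:
\begin{itemize}
\item $D_2(\bar\phi_n)\nabla\psi\to D_2(\bar\phi)\nabla\psi$ strongly in $L^2$ by dominated convergence, while $\nabla\xi_n\rightharpoonup\nabla\xi$ weakly.
\item $R_2(\bar\phi_n,\xi_n)\to R_2(\bar\phi,\xi)$ in $L^2$. Note that in Step~3, $R_2$ is evaluated at $(\bar\phi,\xi)$. The Carath\'eodory property gives a.e. convergence, and (A3) with Vitali's theorem upgrades it to $L^2$ convergence.
\item $\int\xi_n u_n\cdot\nabla\psi\to\int\xi u\cdot\nabla\psi$, using strong convergence of $u_n$ and of $\xi_n$. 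In $d\le3$, $u_n\to u$ in $L^2(0,T;L^6)$, and $\xi_n\to\xi$ in $L^2(L^2)$ with a bound in $L^\infty(L^2)$ (or interpolate). If needed, first take test functions $\psi\in W^{1,\infty}$, then use density.
\end{itemize}
Uniqueness for the linear-in-$\xi$ problem needs care if $R_2$ is only locally Lipschitz. Here I would use (A4) together with a truncation, or identify $\xi$ as the solution selected by Step~3.

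\textbf{Step (c): the biofilm inequality, the main obstacle.} The EVI contains $-\int D_1(\bar\phi_n)\nabla\phi_n\cdot\nabla\phi_n$ and $\langle\partial_t\phi_n,\phi_n\rangle$, which only pass to the limit by lower semicontinuity. So I would use the time-integrated form of the EVI: test with $v\in L^2(0,T;V)$, $v(t)\in\mathcal{K}$, over $(0,T)$. Then:
\begin{itemize}
\item Write $\int_0^T\langle\partial_t\phi_n,\phi_n\rangle=\tfrac12\|\phi_n(T)\|^2-\tfrac12\|\phi_0\|^2$, and use weak lower semicontinuity of $\phi_n(T)$ in $L^2$.
\item The quadratic diffusion term is $\int D_1(\bar\phi_n)|\nabla\phi_n|^2$. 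We have $\sqrt{D_1(\bar\phi_n)}\nabla\phi_n\rightharpoonup\sqrt{D_1(\bar\phi)}\nabla\phi$ weakly in $L^2$, because $\sqrt{D_1(\bar\phi_n)}\to\sqrt{D_1(\bar\phi)}$ boundedly a.e. Weak lower semicontinuity of the norm then gives $\liminf\int D_1(\bar\phi_n)|\nabla\phi_n|^2\ge\int D_1(\bar\phi)|\nabla\phi|^2$.
\item The transport term satisfies $\langle u_n\cdot\nabla\phi_n,\phi_n\rangle=0$. Only $\langle u_n\cdot\nabla\phi_n,v\rangle$ remains, which converges as in Step~(b). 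The $R_1$ term is data and converges strongly.
\end{itemize}
This yields the integrated EVI for $\phi$. Localizing in time (test with $v=\phi$ outside a small interval) recovers the pointwise form. Uniqueness from Step~2 then gives $\phi=$ the first component of $\mathcal{T}(\bar\phi,\bar\xi)$.

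Finally, the strong $L^2(0,T;L^2)$ convergence of the subsequence together with uniqueness of the limit yields continuity of $\mathcal{T}$.
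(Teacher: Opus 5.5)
Your proposal is correct and follows the paper's route: Aubin--Lions/Simon compactness of $\{\mathcal{T}(\bar\phi_n,\bar\xi_n)\}$, identification of every subsequential limit as $\mathcal{T}(\bar\phi,\bar\xi)$ by passing to the limit in the three frozen subproblems, and uniqueness of those subproblems to get convergence of the whole sequence. Your dominated-convergence estimate for the Brinkman difference and your lower-semicontinuity passage in the time-integrated EVI (which handles the $n$-dependent coefficient $D_1(\bar\phi_n)$ and velocity $u_n$ directly) supply what the paper compresses into ``stability'' and ``continuous dependence on data''; the one point you leave open, uniqueness of the semilinear nutrient subproblem when $R_2$ is only locally Lipschitz, is simply asserted in the paper too (it is needed already for $\mathcal{T}$ to be single-valued), so your argument is complete to the same extent as the paper's.
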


\begin{proof}
Let $(\bar\phi_n,\bar\xi_n)\to(\bar\phi,\bar\xi)$ in $L^2(0,T;L^2(\Om))^2$ with
$(\bar\phi_n,\bar\xi_n)\in\mathcal{X}_R$ and $\bar\phi_n,\bar\phi\in\mathcal{K}$ a.e.
Set $(\phi_n,\xi_n):=\mathcal{T}(\bar\phi_n,\bar\xi_n)$.

By the uniform bound \eqref{eq:apriori_ball} and Aubin--Lions/Simon compactness,
$\{(\phi_n,\xi_n)\}$ is relatively compact in $L^2(0,T;L^2)^2$.
Let $(\phi,\xi)$ be any subsequential limit.
Passing to the limit in each of the three subproblems
(using stability of $S_{\mathrm{flow}}$ under a.e.-convergence of $\bar\phi_n$,
continuous dependence of the evolution inclusion on data in $L^2(0,T;V')$,
and standard parabolic stability for the nutrient equation),
one verifies that $(\phi,\xi)=\mathcal{T}(\bar\phi,\bar\xi)$.
Since the limit $\mathcal{T}(\bar\phi,\bar\xi)$ is uniquely determined
(each subproblem in Steps~1--3 has a unique solution for given data),
every convergent subsequence of $\{(\phi_n,\xi_n)\}$ has the same limit.
A standard argument then implies that the full sequence converges:
$\mathcal{T}(\bar\phi_n,\bar\xi_n)\to\mathcal{T}(\bar\phi,\bar\xi)$ in
$L^2(0,T;L^2)^2$.
\end{proof}

Therefore, by Schauder's fixed point theorem
(applied to the continuous compact map $\mathcal{T}$ on the closed bounded convex set
$\mathcal{X}_R$ in the Banach space $L^2(0,T;L^2(\Om))^2$),
$\mathcal{T}$ admits a fixed point $(\phi,\xi)\in\mathcal{X}_R$.
Setting $u=S_{\mathrm{flow}}(\phi(t))$ for a.e.\ $t$ yields a weak solution $(u,\phi,\xi)$ in the sense
of Definition~\ref{def:weak}. The global bounds follow from the uniform energy estimates. \qed

\begin{proposition}[Conditional uniqueness in $2D$]\label{prop:uniq}
Assume \textup{(A1)--(A5)} and \textup{(A7)--(A9)}, that $f\in L^\infty(0,T;V_\sigma')$,
and let $d=2$.
Let $(u_i,\phi_i,\xi_i)$, $i=1,2$, be two weak solutions with the same initial data.
There exists $\varepsilon>0$ depending only on the structural constants and $\Om$
such that if $T\le \varepsilon$ or $\|f\|_{L^2(0,T;V_\sigma')}\le \varepsilon$, then
\[
u_1=u_2,\qquad \phi_1=\phi_2,\qquad \xi_1=\xi_2 \quad \text{a.e. in }(0,T)\times\Om.
\]
\end{proposition}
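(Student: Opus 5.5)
We plan a difference-of-solutions energy argument closed by Gr\"onwall. Set $\delta u:=u_1-u_2$, $\delta\phi:=\phi_1-\phi_2$, $\delta\xi:=\xi_1-\xi_2$.

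\emph{Flow difference.} Subtracting the Brinkman equations tested with $\delta u\in V_\sigma$ gives
\[
\mu\|\nabla\delta u\|^2+\int_\Om\alpha(\phi_1)|\delta u|^2
=-\int_\Om(\alpha(\phi_1)-\alpha(\phi_2))u_2\cdot\delta u .
\]
By (A8), H\"older and the 2D Ladyzhenskaya inequality $\|w\|_{L^4}\le C\|w\|^{1/2}\|w\|_{H^1}^{1/2}$, the right side is at most $L_\alpha\|\delta\phi\|_{L^2}\|u_2\|_{L^4}\|\delta u\|_{L^4}$. This yields
\[
\|\delta u(t)\|_{H^1}\le C\|u_2(t)\|_{H^1}\|\delta\phi(t)\|_{L^2}\le C\|f(t)\|_{V_\sigma'}\|\delta\phi(t)\|_{L^2}.
\]
Here \eqref{eq:u_bound} and $f\in L^\infty(0,T;V_\sigma')$ make the factor bounded pointwise in $t$.

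\emph{Biofilm difference.} Take $v=\phi_2(t)$ in the EVI for $\phi_1$ and $v=\phi_1(t)$ in the EVI for $\phi_2$, then add. The constraint term disappears and we get
\[
\tfrac12\tfrac{d}{dt}\|\delta\phi\|^2+\int D_1(\phi_1)|\nabla\delta\phi|^2\le -\int(D_1(\phi_1)-D_1(\phi_2))\nabla\phi_2\cdot\nabla\delta\phi+\int(R_1^1-R_1^2)\delta\phi+\text{(transport)}.
\]
\begin{itemize}
\item The reaction term is bounded by $L(\|\delta\phi\|^2+\|\delta\xi\|\|\delta\phi\|)$ using (A7).
\item The transport terms split as $\langle u_1\cdot\nabla\delta\phi,\delta\phi\rangle+\langle\delta u\cdot\nabla\phi_2,\delta\phi\rangle$. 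The first vanishes by skew-symmetry. The second equals $-\int\phi_2\,\delta u\cdot\nabla\delta\phi$. Since $0\le\phi_2\le\phi^\ast$, it is bounded by $\phi^\ast\|\delta u\|\|\nabla\delta\phi\|\le\frac{d_1}{4}\|\nabla\delta\phi\|^2+C\|f\|_{L^\infty V_\sigma'}^2\|\delta\phi\|^2$.
\item The cross diffusion term is the main obstacle. By (A9) it is bounded by $L_D\|\delta\phi\|_{L^4}\|\nabla\phi_2\|_{L^4}\|\nabla\delta\phi\|$. Ladyzhenskaya and Young give at most $\frac{d_1}{4}\|\nabla\delta\phi\|^2+C\|\nabla\phi_2\|_{L^4}^4\|\delta\phi\|^2$, modulo lower-order terms from the full $H^1$ norm. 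This needs $\nabla\phi_2\in L^4(0,T;L^4)$, which weak solutions do not obviously have.
\end{itemize}

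Two routes handle that term. The first is to use the available bound $\nabla\phi_2\in L^2(0,T;L^2)$, estimate $\|\delta\phi\|_{L^\infty}\le\phi^\ast$, and treat the term as $L_D\int|\delta\phi||\nabla\phi_2||\nabla\delta\phi|$. The second is to invoke smallness: the a priori bounds in Theorem~\ref{thm:exist} make $\|\nabla\phi_2\|_{L^2(0,T;L^2)}$ small when $T\le\varepsilon$ or $\|f\|\le\varepsilon$. The plan is to absorb the term via a Gagliardo--Nirenberg bound, with the smallness coefficient $\varepsilon$ multiplying $\|\nabla\delta\phi\|^2$ so that it is absorbed into $d_1\|\nabla\delta\phi\|^2$. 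In that case I would state that the argument requires the extra integrability implicit in the smallness regime.

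\emph{Nutrient difference.} Test the $\xi$ equations with $\delta\xi$.
\begin{itemize}
\item The cross diffusion term is $(D_2(\phi_1)-D_2(\phi_2))\nabla\xi_2\cdot\nabla\delta\xi$, handled exactly as in the biofilm step.
\item The reaction term is bounded by $L(\|\delta\phi\|+\|\delta\xi\|)\|\delta\xi\|$.
\item In the transport terms, $\langle u_1\cdot\nabla\delta\xi,\delta\xi\rangle=0$. The remaining piece $-\int\xi_2\,\delta u\cdot\nabla\delta\xi$ is bounded by $\|\xi_2\|_{L^4}\|\delta u\|_{L^4}\|\nabla\delta\xi\|$. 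In 2D this is at most $\frac{d_2}{4}\|\nabla\delta\xi\|^2+C\|\xi_2\|_{H^1}^2\|f\|_{L^\infty V'}^2\|\delta\phi\|^2$, where $\|\xi_2\|_{H^1}^2\in L^1(0,T)$.
\end{itemize}

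\emph{Conclusion.} Summing the two energy inequalities gives
\[
\frac{d}{dt}\bigl(\|\delta\phi\|^2+\|\delta\xi\|^2\bigr)\le g(t)\bigl(\|\delta\phi\|^2+\|\delta\xi\|^2\bigr),
\]
with $g\in L^1(0,T)$, once the cross diffusion terms have been absorbed using the smallness of $\varepsilon$. Zero initial difference and Gr\"onwall then give $\delta\phi=\delta\xi=0$, and the flow estimate gives $\delta u=0$.
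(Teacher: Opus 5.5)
The gap in your proposal is the cross-diffusion term. Your other estimates are sound, and somewhat cleaner than the paper's. This includes the flow bound $\|\delta u(t)\|_{H^1}\le C\|f(t)\|_{V_\sigma'}\|\delta\phi(t)\|_{L^2}$, the reaction bounds, and the transport bounds; in particular, writing $\langle\delta u\cdot\nabla\phi_2,\delta\phi\rangle=-\int_\Om\phi_2\,\delta u\cdot\nabla\delta\phi\,dx$ and using $0\le\phi_2\le\phi^\ast$ works well.

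The problem term is $L_D\int_\Om|\delta\phi|\,|\nabla\phi_2|\,|\nabla\delta\phi|\,dx$, together with its nutrient analogue $\int_\Om|\delta\phi|\,|\nabla\xi_2|\,|\nabla\delta\xi|\,dx$, which you defer to ``exactly as in the biofilm step''. You never bound it by $\eta\|\nabla\delta\phi\|^2+g(t)\|\delta\phi\|^2$ with $g\in L^1(0,T)$, and neither of your routes does so.

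Route~1 gives $L_D\phi^\ast\|\nabla\phi_2\|\,\|\nabla\delta\phi\|\le\frac{d_1}{4}\|\nabla\delta\phi\|^2+C\|\nabla\phi_2\|^2$. The last term does not vanish with $\delta\phi$, so Gr\"onwall with zero data only yields $E(t)\lesssim\int_0^t\|\nabla\phi_2\|^2$, not $E\equiv0$.

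Route~2 fails twice. First, smallness of $\|f\|_{L^2(0,T;V_\sigma')}$ says nothing about $\nabla\phi_2$: the biofilm is driven by $\phi_0$ and $R_1$, and the energy inequality only gives $d_1\int_0^T\|\nabla\phi_2\|^2\le\frac12\|\phi_0\|^2+CT$. As $T\to0$ the smallness holds only at a solution-dependent rate, not with $\varepsilon$ depending on structural constants. Second, and decisively, smallness of $\|\nabla\phi_2\|_{L^2}$ cannot become a small coefficient in front of $\|\nabla\delta\phi\|^2$. In 2D there is no inequality $\int_\Om|w|\,|\nabla v|\,|\nabla w|\,dx\le C\|\nabla v\|_{L^2}\|w\|_{H^1}^2$, even for bounded functions. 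Taking $v=w^2$ it would give $\|\nabla(w^2)\|_{L^2}\lesssim\|w\|_{H^1}^2$, which fails for rescaled truncations of $(-\log|x|)^\beta$ with $\frac14\le\beta<\frac12$. No Gagliardo--Nirenberg inequality repairs this; you need $\nabla\phi_2$ in $L^q$ for some $q>2$ in space.

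The paper does not supply the missing idea. At this point it asserts $L_D\int_\Om|\delta\phi|\,|\nabla\phi_2|\,|\nabla\delta\phi|\,dx\le\eta\|\nabla\delta\phi\|^2+C_\eta\|\nabla\phi_2\|_{L^2}^2\|\delta\phi\|_{L^2}^2$ ``by Ladyzhenskaya and Young'', and this bound is false. Replace $(\delta\phi,\phi_2)$ by $(\psi(\cdot/\lambda),\chi(\cdot/\lambda))$ for fixed bumps $\psi,\chi$. In 2D the left side and $\|\nabla\delta\phi\|^2$ are scale invariant, while $\|\nabla\phi_2\|^2\|\delta\phi\|^2=O(\lambda^2)$. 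Letting $\lambda\to0$ and then $\eta\to0$ would force $\int|\psi|\,|\nabla\chi|\,|\nabla\psi|=0$ for all bumps, which is absurd. The same objection applies to the paper's nutrient cross-term, so your diagnosis is the correct one.

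A genuine repair is a weak--strong hypothesis on one of the solutions, for example $\nabla\phi_2,\nabla\xi_2\in L^4(0,T;L^4(\Om))$. More generally, $L^p(0,T;L^q(\Om))$ with $\frac2p+\frac2q=1$ and $q>2$ suffices. Under it, your biofilm bound $\frac{d_1}{4}\|\nabla\delta\phi\|^2+C(\|\nabla\phi_2\|_{L^4}^4+1)\|\delta\phi\|^2$ holds. The nutrient analogue $\frac{d_2}{4}\|\nabla\delta\xi\|^2+\frac{d_1}{4}\|\nabla\delta\phi\|^2+C(\|\nabla\xi_2\|_{L^4}^4+1)\|\delta\phi\|^2$ holds as well. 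Together they close the Gr\"onwall argument with no smallness of $T$ or $f$. As written, however, neither your proposal nor the paper's proof establishes the proposition under its stated hypotheses.
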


\begin{proof}
Set $\delta u=u_1-u_2$, $\delta\phi=\phi_1-\phi_2$, $\delta\xi=\xi_1-\xi_2$.
We carry out the energy method in detail.
The standing assumption $f\in L^\infty(0,T;V_\sigma')$ guarantees, via
\eqref{eq:u_bound}, that $\|u_i(t)\|_{H^1}\le C\|f\|_{L^\infty(0,T;V_\sigma')}$ for
a.e.\ $t$ and $i=1,2$; this bound is used in step~(i) below.

\smallskip
\noindent\emph{(i) Flow stability.}
Subtracting the Brinkman problems and testing with $\delta u\in V_\sigma$, using coercivity of $\alpha(\cdot)$,
we obtain for a.e.\ $t$,
\[
\mu\|\nabla\delta u(t)\|_{L^2}^2 + \alpha_0\|\delta u(t)\|_{L^2}^2
\le \int_\Om |\alpha(\phi_1(t))-\alpha(\phi_2(t))|\,|u_2(t)|\,|\delta u(t)|\,dx
\le L_\alpha\int_\Om |\delta\phi|\,|u_2|\,|\delta u|\,dx,
\]
where we used (A8). In dimension $d=2$ the velocity $u_2$ need not belong to
$L^\infty(\Om)$: under the standing assumption $f\in L^\infty(0,T;V_\sigma')$ the
Brinkman solution of Step~1 satisfies only $u_2(t)\in V_\sigma\subset H_0^1(\Om)^2$,
and in two dimensions the borderline Sobolev embedding $H^1(\Om)\hookrightarrow
L^q(\Om)$ holds for every $q\in[1,\infty)$ but fails for $q=\infty$ (see, e.g.,
\cite[Ch.~9]{Brezis}; a standard counterexample on a ball is
$x\mapsto(-\log|x|)^\beta$ with $0<\beta<1/2$). The cubic term must therefore be
estimated by interpolation rather than by $\|u_2\|_{L^\infty}$.
By H\"older's inequality with exponents $(4,4,2)$, the
Ladyzhenskaya inequality $\|\delta\phi\|_{L^4}\le C\|\delta\phi\|_{L^2}^{1/2}\|\delta\phi\|_{H^1}^{1/2}$,
the embedding $\|u_2\|_{L^4}\le C\|u_2\|_{H^1}$, and Young's inequality, we obtain
for every $\sigma>0$,
\begin{equation}\label{eq:uniq-flow}
\mu\|\nabla\delta u(t)\|_{L^2}^2 + \frac{\alpha_0}{2}\|\delta u(t)\|_{L^2}^2
\le \sigma\,\|\nabla\delta\phi(t)\|_{L^2}^2 + C\,\|\delta\phi(t)\|_{L^2}^2,
\end{equation}
where $C=C(\sigma,\mu,\alpha_0,L_\alpha,\Om,\|f\|_{L^\infty(0,T;V_\sigma')})$.
(The gradient term $\sigma\|\nabla\delta\phi\|_{L^2}^2$ is harmless: it will be absorbed
into the biofilm diffusion in step~(iv).)

\smallskip
\noindent\emph{(ii) Biofilm estimate (monotonicity).}
Using the EVI formulation for $\phi_i$ (testing the inequality for $\phi_1$ with
$v=\phi_2$ and vice versa, then adding) and monotonicity of $\partial I_{\mathcal K}$,
one derives
\begin{align*}
\frac12\frac{d}{dt}\|\delta\phi(t)\|_{L^2}^2
&+ \int_\Om D_1(\phi_1)\,|\nabla\delta\phi|^2\,dx \\
&\le -\int_\Om \bigl(D_1(\phi_1)-D_1(\phi_2)\bigr)\nabla\phi_2\cdot\nabla\delta\phi\,dx \\
&\quad + \int_\Om \bigl(R_1(\phi_1,\xi_1)-R_1(\phi_2,\xi_2)\bigr)\delta\phi\,dx
+ \int_\Om (\delta u\cdot\nabla\phi_1)\,\delta\phi\,dx.
\end{align*}
Here we have used the decomposition
$u_1\cdot\nabla\phi_1-u_2\cdot\nabla\phi_2=\delta u\cdot\nabla\phi_1+u_2\cdot\nabla\delta\phi$;
the contribution of the second term vanishes when tested against $\delta\phi$, since
\[
\int_\Om (u_2\cdot\nabla\delta\phi)\,\delta\phi\,dx
=\frac12\int_\Om u_2\cdot\nabla|\delta\phi|^2\,dx=0
\]
because $\nabla\cdot u_2=0$ and $u_2|_{\partial\Om}=0$ (cf.\ Lemma~\ref{lem:xi-nonneg}).
Hence only the term $\int_\Om (\delta u\cdot\nabla\phi_1)\,\delta\phi\,dx$ survives on
the right-hand side.

The \emph{diffusion cross-term} is estimated using (A9):
\[
\left|\int_\Om (D_1(\phi_1)-D_1(\phi_2))\nabla\phi_2\cdot\nabla\delta\phi\,dx\right|
\le L_D\int_\Om |\delta\phi|\,|\nabla\phi_2|\,|\nabla\delta\phi|\,dx.
\]
In $2D$, by Ladyzhenskaya's inequality
$\|w\|_{L^4}\le C\|w\|_{L^2}^{1/2}\|\nabla w\|_{L^2}^{1/2}$
and Young's inequality with parameter $\eta>0$,
\[
L_D\int_\Om |\delta\phi|\,|\nabla\phi_2|\,|\nabla\delta\phi|\,dx
\le \eta\|\nabla\delta\phi\|_{L^2}^2
+ C_\eta\|\nabla\phi_2\|_{L^2}^2\|\delta\phi\|_{L^2}^2.
\]
The \emph{reaction cross-term} is estimated by (A7):
$|R_1(\phi_1,\xi_1)-R_1(\phi_2,\xi_2)|\le L(|\delta\phi|+|\delta\xi|)$, giving
\[
\int_\Om (R_1(\phi_1,\xi_1)-R_1(\phi_2,\xi_2))\,\delta\phi\,dx
\le C\bigl(\|\delta\phi\|_{L^2}^2+\|\delta\xi\|_{L^2}^2\bigr).
\]
The \emph{transport cross-term} is handled by Ladyzhenskaya and Young:
\[
\left|\int_\Om (\delta u\cdot\nabla\phi_1)\,\delta\phi\,dx\right|
\le \eta \|\nabla\delta\phi\|_{L^2}^2 + \eta \|\nabla\delta u\|_{L^2}^2
+ C_\eta \|\nabla\phi_1\|_{L^2}^2 \|\delta\phi\|_{L^2}^2.
\]
Combining and using (A2) ($D_1(\phi_1)\ge d_1$), we obtain
\begin{equation}\label{eq:uniq-bio}
\frac12\frac{d}{dt}\|\delta\phi\|_{L^2}^2
+ (d_1-2\eta)\|\nabla\delta\phi\|_{L^2}^2
\le \eta\|\nabla\delta u\|_{L^2}^2
+ C\bigl(\|\nabla\phi_1\|_{L^2}^2+\|\nabla\phi_2\|_{L^2}^2+1\bigr)
\bigl(\|\delta\phi\|_{L^2}^2 + \|\delta\xi\|_{L^2}^2\bigr).
\end{equation}

\smallskip
\noindent\emph{(iii) Nutrient estimate.}
Subtracting the nutrient equations and testing with $\delta\xi$, one obtains analogously
(using (A7) for the reaction term and (A9) for the diffusion cross-term
$(D_2(\phi_1)-D_2(\phi_2))\nabla\xi_2\cdot\nabla\delta\xi$):
\begin{equation}\label{eq:uniq-nut}
\frac12\frac{d}{dt}\|\delta\xi\|_{L^2}^2 + (d_2-2\eta)\|\nabla\delta\xi\|_{L^2}^2
\le \eta\|\nabla\delta u\|_{L^2}^2
+ C\bigl(\|\nabla\xi_1\|_{L^2}^2 + \|\nabla\xi_2\|_{L^2}^2+1\bigr)
\bigl(\|\delta\phi\|_{L^2}^2 + \|\delta\xi\|_{L^2}^2\bigr).
\end{equation}

\smallskip
\noindent\emph{(iv) Gr\"onwall.}
Adding \eqref{eq:uniq-bio} and \eqref{eq:uniq-nut}, we control the two
$\eta\|\nabla\delta u\|_{L^2}^2$ terms on their right-hand sides by means of
\eqref{eq:uniq-flow}, which gives
\[
\eta\|\nabla\delta u\|_{L^2}^2
\le \frac{\eta}{\mu}\Bigl(\sigma\|\nabla\delta\phi\|_{L^2}^2 + C\|\delta\phi\|_{L^2}^2\Bigr).
\]
Choosing first $\sigma>0$ and then $\eta>0$ small enough that
$d_1-2\eta-2\eta\sigma/\mu>0$ and $d_2-2\eta>0$, all gradient terms on the right
are absorbed into the left-hand diffusion terms. This yields
\[
\frac{d}{dt}\Bigl(\|\delta\phi(t)\|_{L^2}^2+\|\delta\xi(t)\|_{L^2}^2\Bigr)
\le C(t)\Bigl(\|\delta\phi(t)\|_{L^2}^2+\|\delta\xi(t)\|_{L^2}^2\Bigr),
\]
where
\[
C(t)= C_0\bigl(1+\|\nabla\phi_1(t)\|_{L^2}^2+\|\nabla\phi_2(t)\|_{L^2}^2
+\|\nabla\xi_1(t)\|_{L^2}^2+\|\nabla\xi_2(t)\|_{L^2}^2\bigr)
\]
and $C_0$ depends on $\mu,\alpha_0,d_i,L,L_\alpha,L_D,\Om$ and on
$\|f\|_{L^\infty(0,T;V_\sigma')}$.
By the a priori estimates of Theorem~\ref{thm:exist},
$\int_0^T C(t)\,dt\le C_0\bigl(T+C_{\mathrm{apriori}}\bigr)<\infty$,
where $C_{\mathrm{apriori}}=\int_0^T\bigl(\|\nabla\phi_1\|_{L^2}^2+\|\nabla\phi_2\|_{L^2}^2
+\|\nabla\xi_1\|_{L^2}^2+\|\nabla\xi_2\|_{L^2}^2\bigr)\,dt$.
Since $\delta\phi(0)=\delta\xi(0)=0$, Gr\"onwall's inequality forces
$\delta\phi\equiv\delta\xi\equiv 0$ on $(0,T)$; in particular this holds under either
smallness condition $T\le\varepsilon$ or $\|f\|_{L^2(0,T;V_\sigma')}\le\varepsilon$.
Finally, \eqref{eq:uniq-flow} yields $\delta u\equiv 0$.
\end{proof}

\begin{remark}[Removability of the smallness condition]\label{rem:uniq-uncond}\rm\,
The argument above in fact establishes uniqueness \emph{without} any smallness
restriction. Indeed, under the standing assumption $f\in L^\infty(0,T;V_\sigma')$ the
coefficient $C(t)$ is integrable on $(0,T)$ by the a priori estimates of
Theorem~\ref{thm:exist}, so Gr\"onwall's inequality applied to
$E(t)=\|\delta\phi(t)\|_{L^2}^2+\|\delta\xi(t)\|_{L^2}^2$ with $E(0)=0$ forces
$E\equiv 0$ for every finite $T$. The smallness conditions retained in
Theorem~\ref{thm:exist} and Proposition~\ref{prop:uniq} are therefore sufficient but
not necessary; we keep them only to make the explicit dependence on the data
transparent. (For merely $L^2$-in-time forcing the velocity is only in
$L^2(0,T;H^1)$, in which case integrability of $C(t)$ is no longer automatic and a
smallness or higher-integrability hypothesis is genuinely required.)
\end{remark}

\section{Simulations}\label{sec:sim}

We present numerical simulations to illustrate the evolution of the coupled
Brinkman--biofilm--nutrient system and to complement the theoretical analysis.
The computational domain $\Om\subset\R^2$ is the unit square $(0,1)\times(0,1)$
(in $\mathrm{mm}^2$) containing circular obstacles representing a heterogeneous porous
medium, as depicted in Figure~\ref{fig:porous med}.
The biomass is initialized adhering to the solid obstacles with volume fraction
$\phi_{0}=0.7$, with no nutrient present initially; the clean medium is assigned base
permeability $k=10^{-5}$.
We set homogeneous Neumann boundary conditions for the biomass at all walls.
Similarly, homogeneous Neumann boundary conditions are set for the nutrient at all
walls except the left wall, where it is fed by a constant nutrient supply $\xi_D=1$.
The fluid flows from left to right with initial parabolic velocity and with no-slip
conditions at the top and bottom walls.
We take the maximum biomass density to be $\phi^\ast=1$, and the biomass is regarded
as mature once its density reaches the maturity threshold $\phi_s=0.9$.

\noindent\textbf{Numerical method.}  We perform simulations using the BIO2020 MATLAB code which is available on the GitHub platform \cite{Bio2020}. The domain $\Om$ is discretized uniformly into rectangles of area $\Delta x \times \Delta y$. The solution of the system \eqref{eq:Brinkman}--\eqref{eq:nut} is obtained by implementing the operator splitting method \cite{LeVeque02}, where for each time step, the fluid velocity $u$ is obtained from the Brinkman equations \eqref{eq:Brinkman} then it is used to find the advection parts of the biofilm--nutrient system \eqref{eq:bio}--\eqref{eq:nut}. Then the obtained solutions are used to find the diffusion--reaction parts in the system \eqref{eq:bio}--\eqref{eq:nut}. The biofilm--nutrient system \eqref{eq:bio}--\eqref{eq:nut} is approximated in time using the implicit backward Euler method. The advection parts are approximated using the upwind scheme \cite{LeVeque02} while the diffusion--reaction parts are approximated using the cell-centered finite difference method \cite{RussellWheeler83}.   The Brinkman system \eqref{eq:Brinkman} is approximated using the marker--and--cell method \cite{HarlowWelch} (see also \cite{Patankar80}). To deal with the volume--fraction constraint $0\le \phi\le \phi^\ast$, Lagrange multiplier and semi-smooth Newton methods \cite{Ulbrich2011} are implemented.

\noindent\textbf{Drag coefficient.}
The drag function $\alpha(\phi)$ in the Brinkman equations encodes the reduction
of permeability due to biofilm growth. We use 
\[
\alpha(\phi) = \frac{\mu}{k_b + k_0\,(1-\phi/\phi^\ast)^2},
\]
where $k_0>0$ is the base permeability of the clean porous medium and
$k_b>0$ is the intrinsic biofilm permeability.
This function satisfies assumption~(A1) for any $k_b>0$, since
$\mu/(k_b+k_0)\le\alpha(\phi)\le\mu/k_b$.
Smaller values of $k_b$ correspond to less permeable (denser) biofilm,
leading to stronger flow--biofilm coupling. Such Kozeny--Carman-type clogging
closures, in which the drag (inverse permeability) increases as the available pore
space decreases, are standard in porous-media modeling; see, e.g.,
\cite{Bear,NieldBejan,Vafai}.

Our study evaluates two distinct cases.

\noindent\textbf{Case 1} investigates the effect of fluid flow on biofilm growth
by comparing two scenarios: a \emph{static scenario}, where the medium is
nutrient-rich but the fluid remains at rest, and a \emph{dynamic scenario},
where the fluid flows from left to right, with nutrients continuously
and intensively injected through the inlet; see Figure~\ref{fig:comparison}.
In the static case, biofilm growth is limited by nutrient depletion near the
colony centers, while in the dynamic case, the sustained nutrient supply
transported from the inlet leads to more extensive and asymmetric biofilm
development.

\noindent\textbf{Case 2} investigates the effect of biofilm permeability $k_b$
on the flow and on the biofilm growth;
see Figures~\ref{fig:placeholder} and~\ref{fig:case 3}.
When $k_b$ is very small ($k_b=10^{-15}$), the biofilm acts almost as a solid
obstacle, strongly redirecting the flow and limiting nutrient penetration into
the biofilm interior.
At intermediate permeability ($k_b=10^{-5}$), some flow penetrates the biofilm
region, providing nutrients and promoting growth.

\noindent\textbf{Nonnegativity of the nutrient.}
Figure~\ref{fig:permeability} reports the minimum nutrient concentration over time
and confirms that it remains nonnegative throughout the simulation, in agreement with
Lemma~\ref{lem:xi-nonneg}. The curves for $k_b=10^{-15}$ and $k_b=10^{-5}$ are nearly
indistinguishable, indicating that the minimum nutrient level is essentially
insensitive to the biofilm permeability in this range.

\begin{figure}[H]
    \centering
    \includegraphics[width=0.4\linewidth]{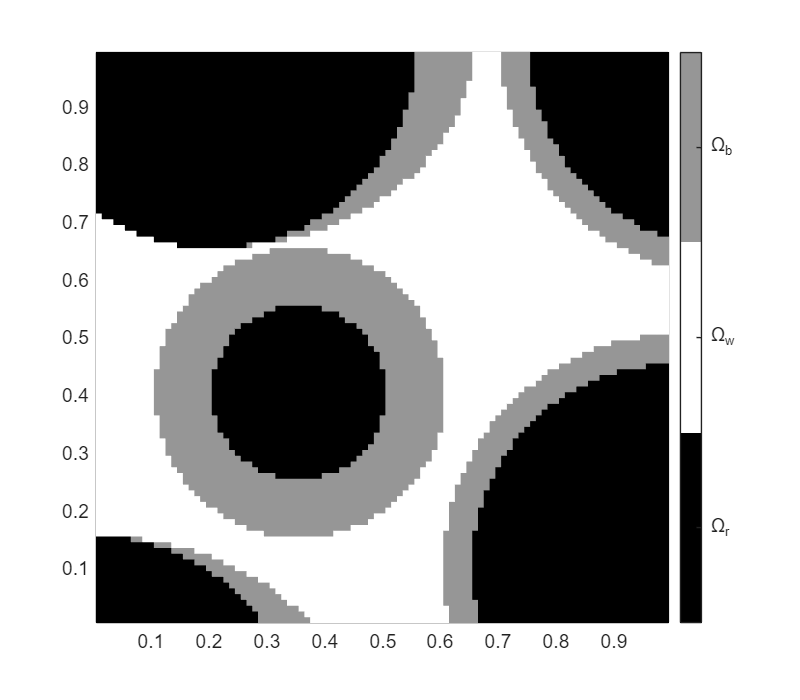}
    \includegraphics[width=0.4\linewidth]{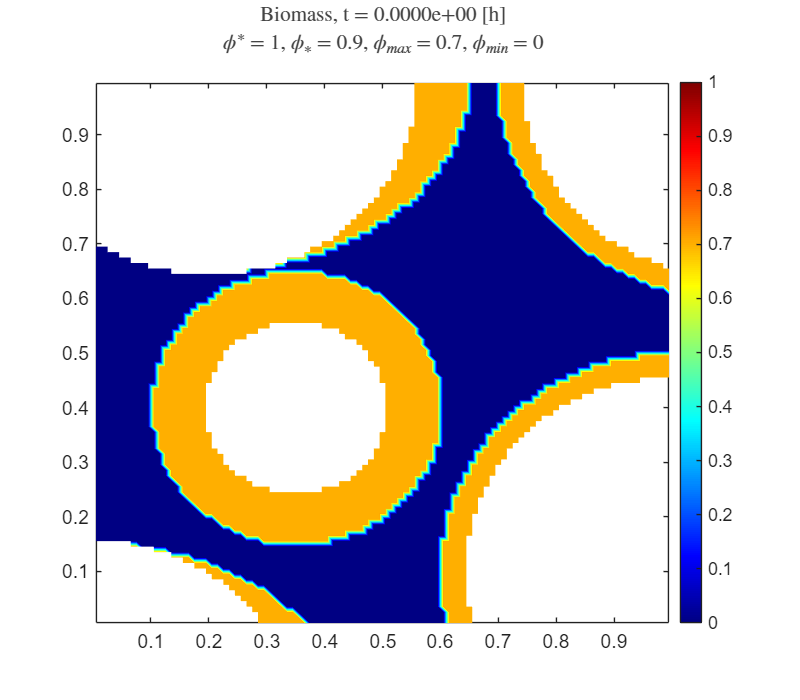}
   \includegraphics[width=0.4\linewidth]{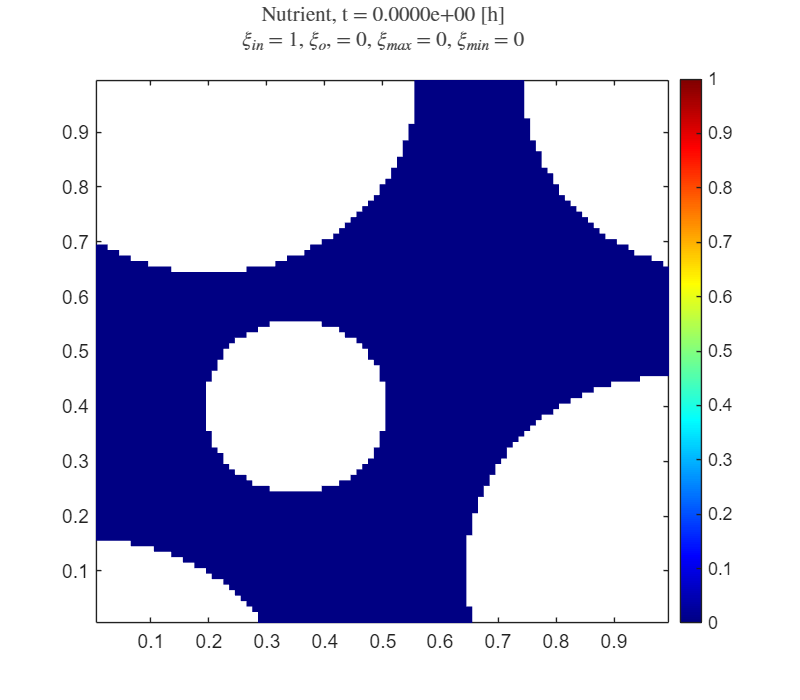}
   \includegraphics[width=0.4\linewidth]{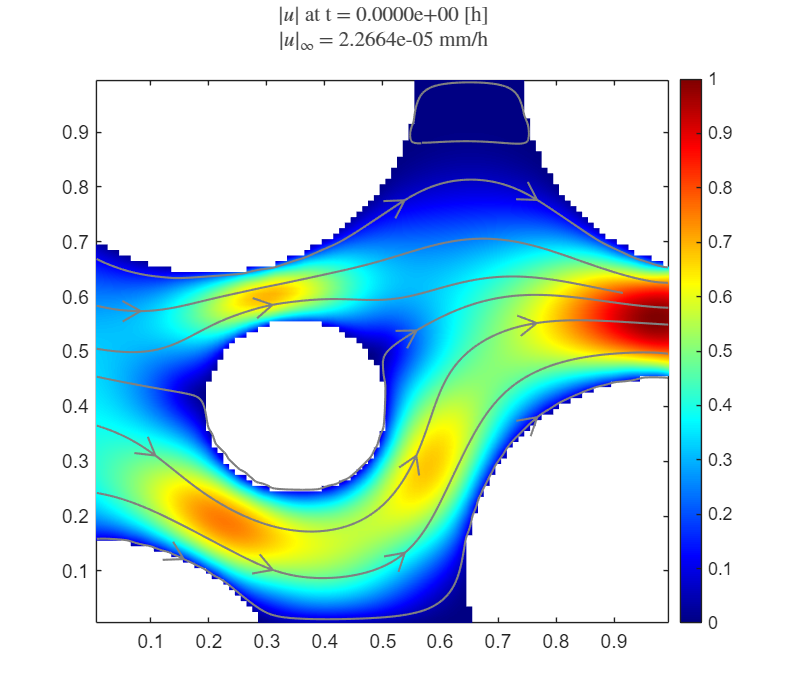}
    \caption{Initial porous medium $\Omega=\Omega_r\cup \Omega_b \cup \Omega_n$; $\Omega_r$: rock domain, $\Omega_b$: biomass domain, $\Omega_n:$ void space }
    \label{fig:porous med}
\end{figure}
\begin{figure}[H]
    \centering
      \includegraphics[width=0.34\linewidth]{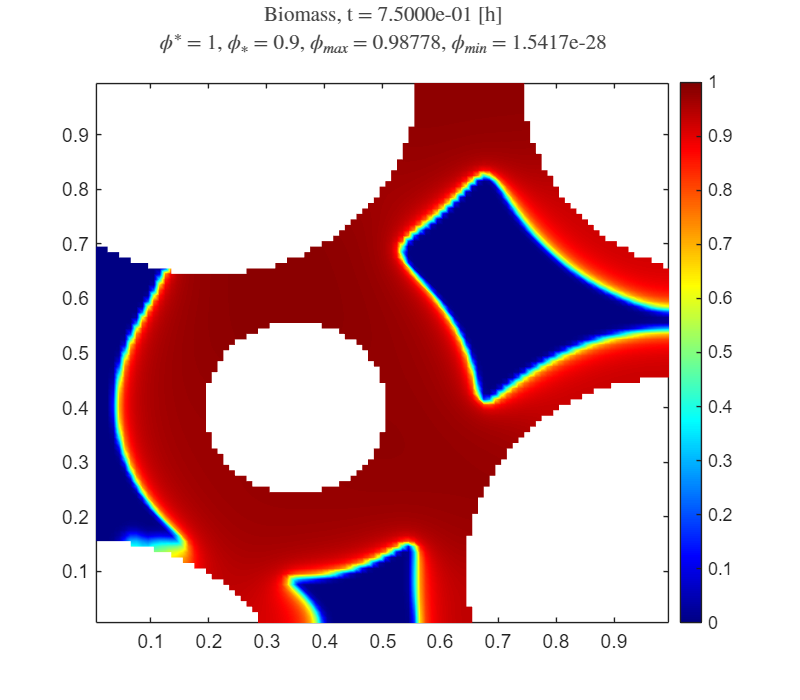}
    \includegraphics[width=0.34\linewidth]{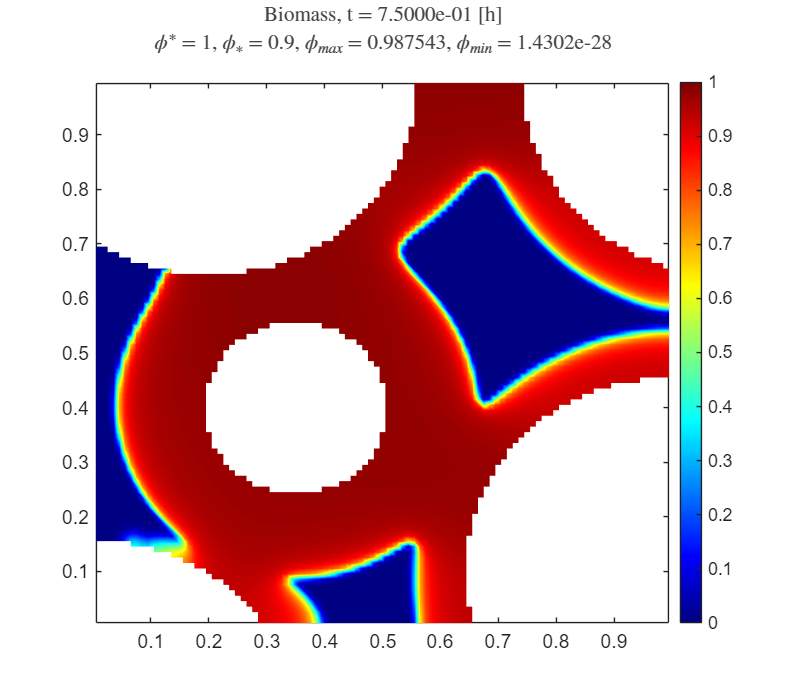}
     \includegraphics[width=0.34\linewidth]{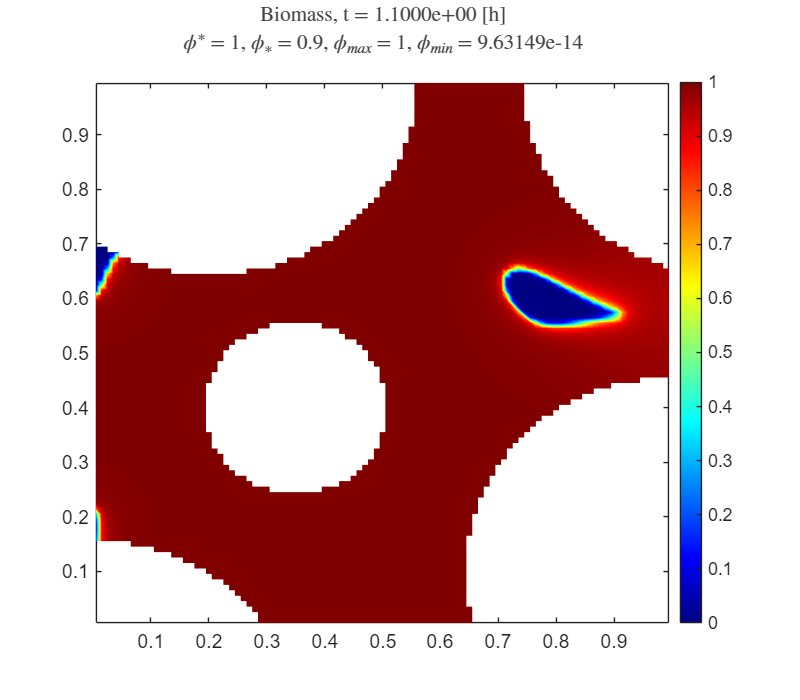}
    \includegraphics[width=0.34\linewidth]{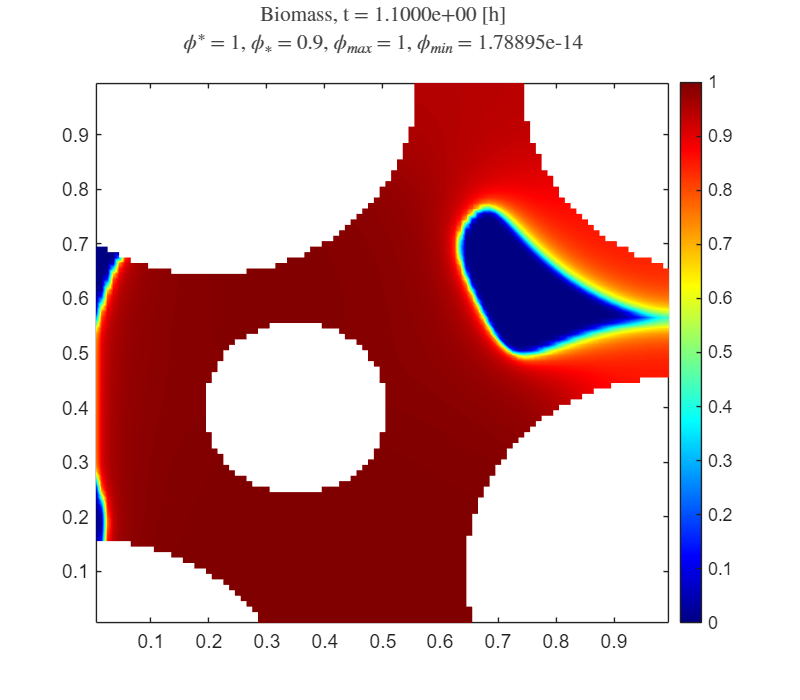}
     \includegraphics[width=0.34\linewidth]{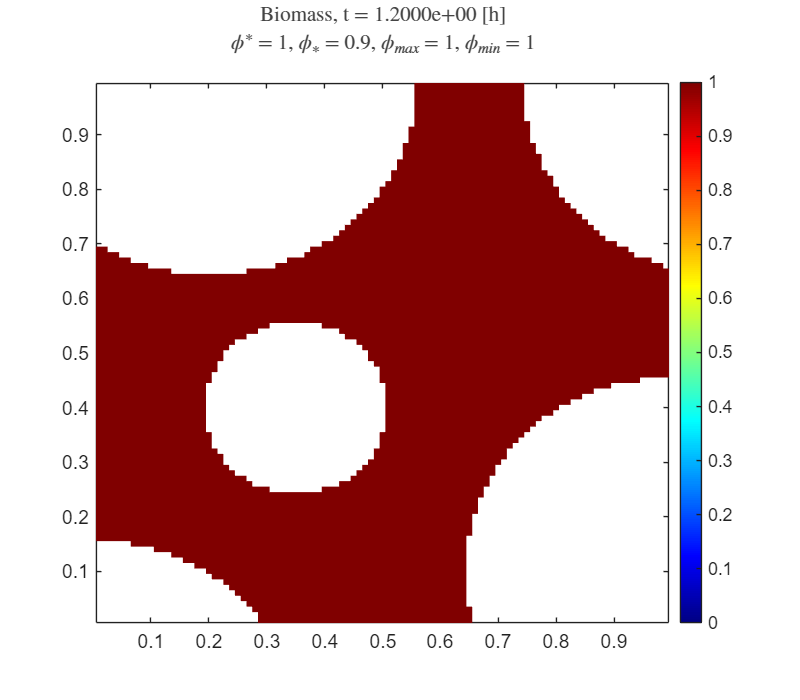}
    \includegraphics[width=0.34\linewidth]{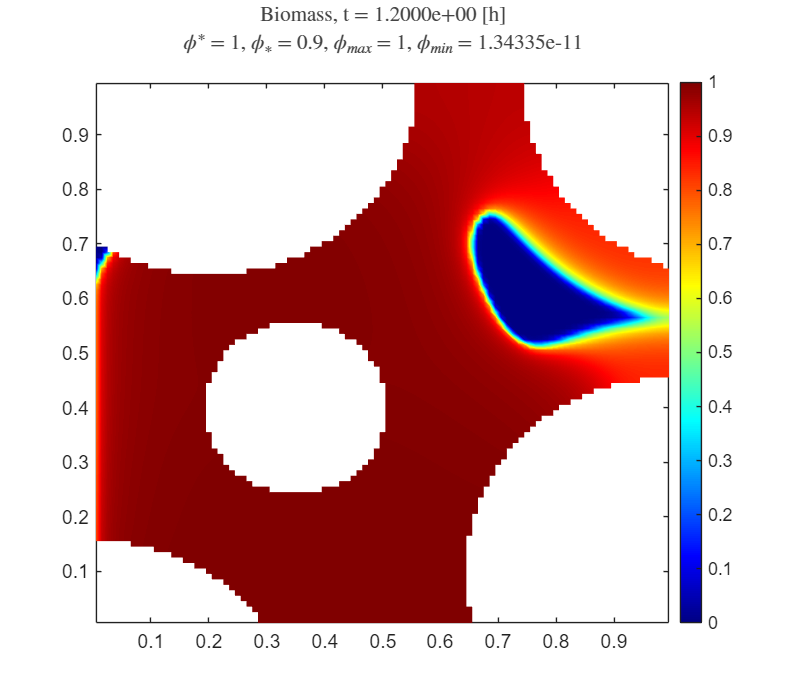}
    \caption{Comparison of stationary conditions (left) and active flow conditions (right)  }
    \label{fig:comparison}
\end{figure}
\begin{figure}[H]
    \centering
    \includegraphics[width=0.34\linewidth]{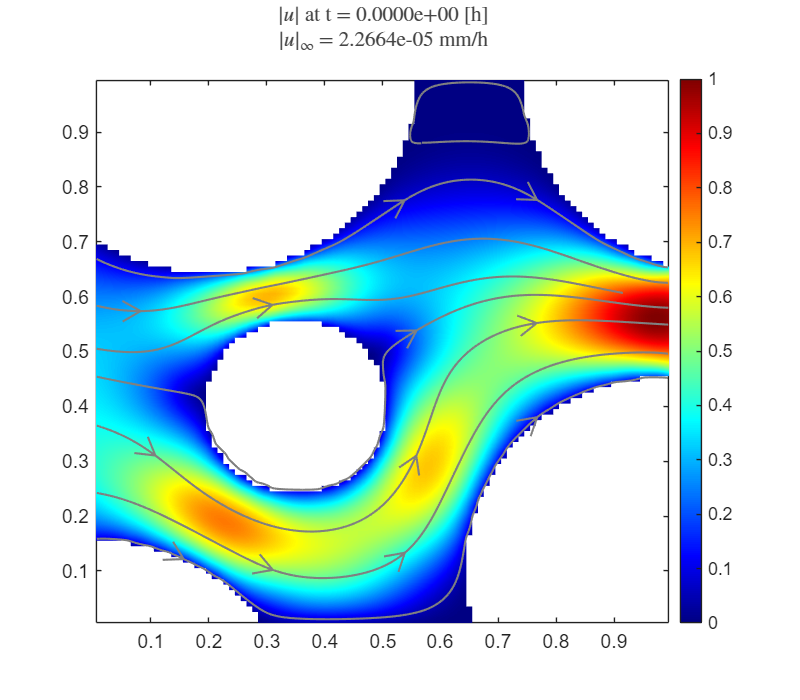}
     \includegraphics[width=0.34\linewidth]{Pic/u_tn0_dir3.png}
     \includegraphics[width=0.34\linewidth]{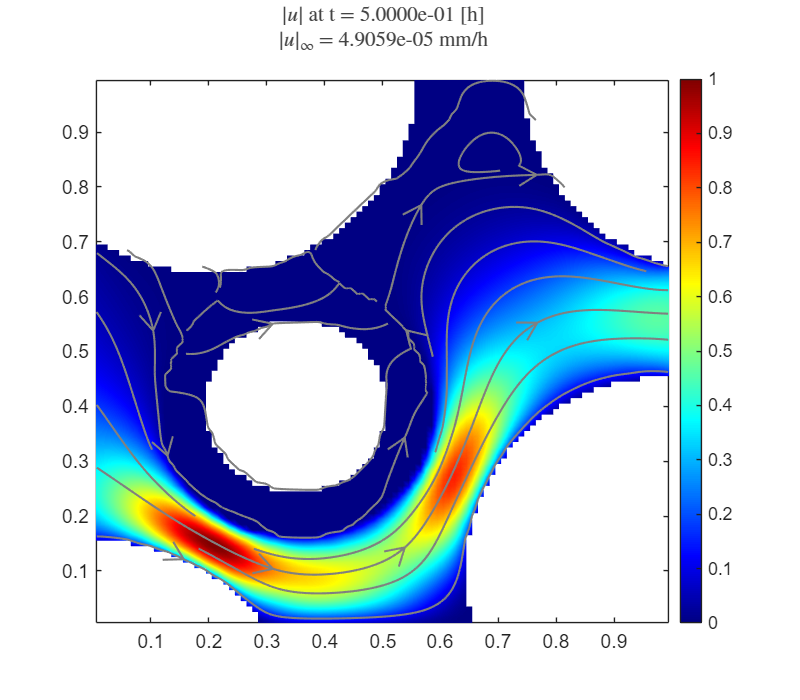}
     \includegraphics[width=0.34\linewidth]{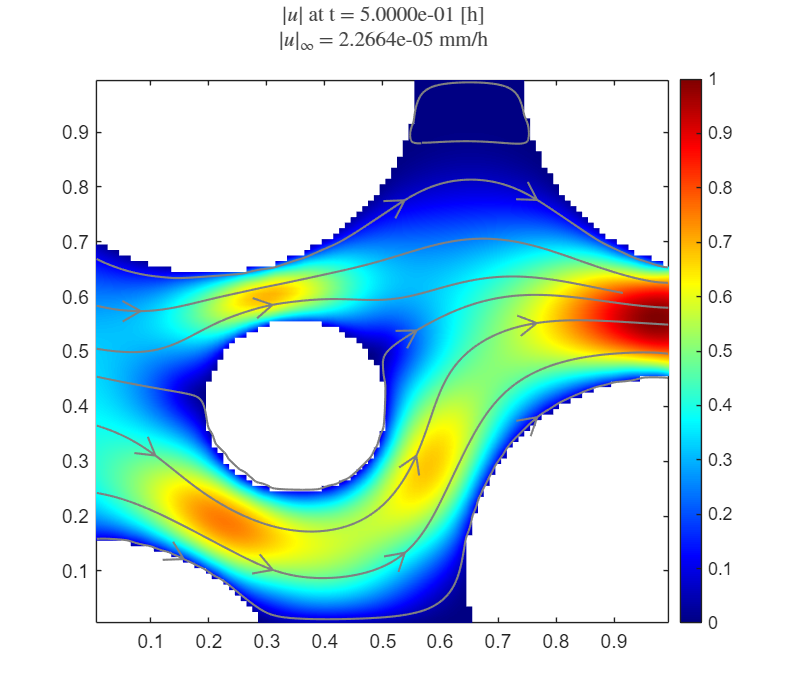}
     \includegraphics[width=0.34\linewidth]{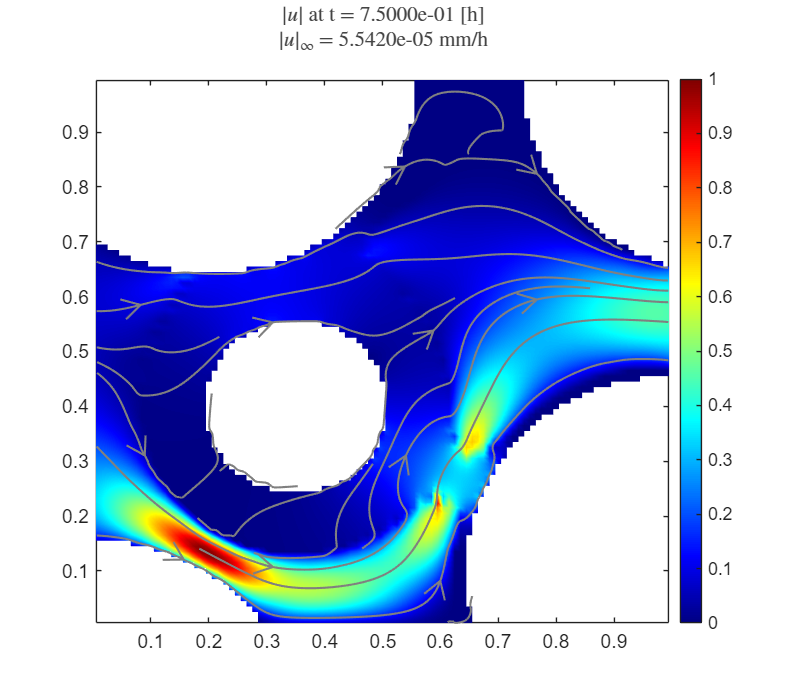}
      \includegraphics[width=0.34\linewidth]{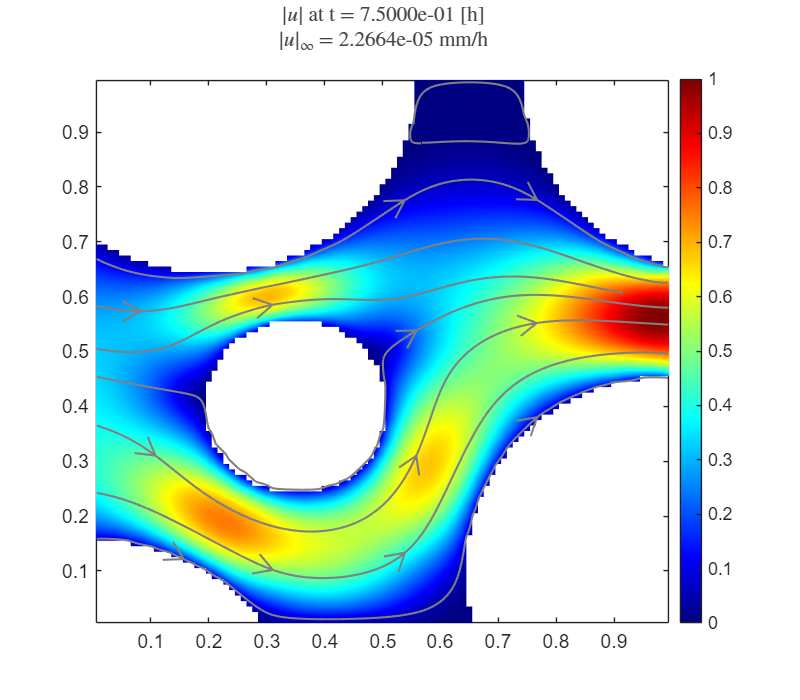}
    \includegraphics[width=0.34\linewidth]{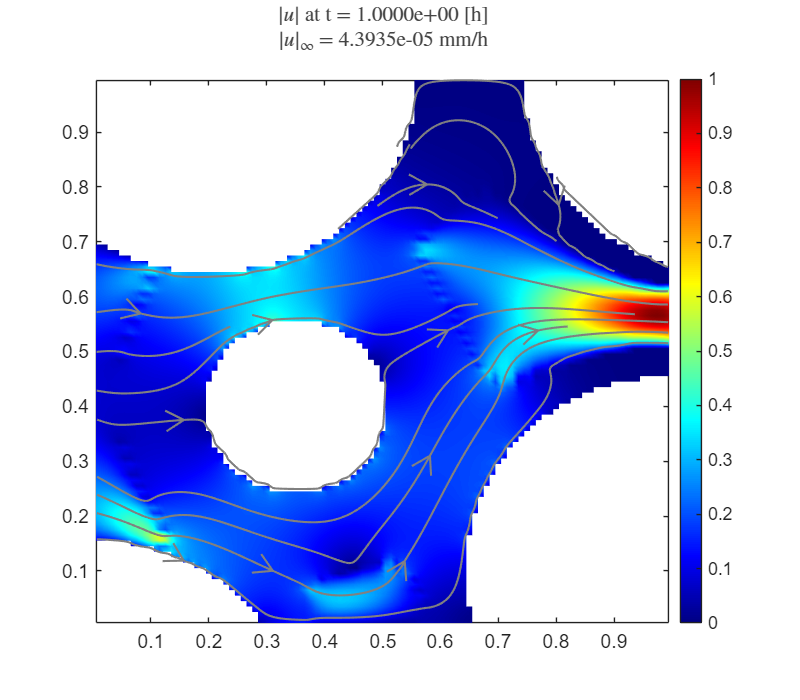}
    \includegraphics[width=0.34\linewidth]{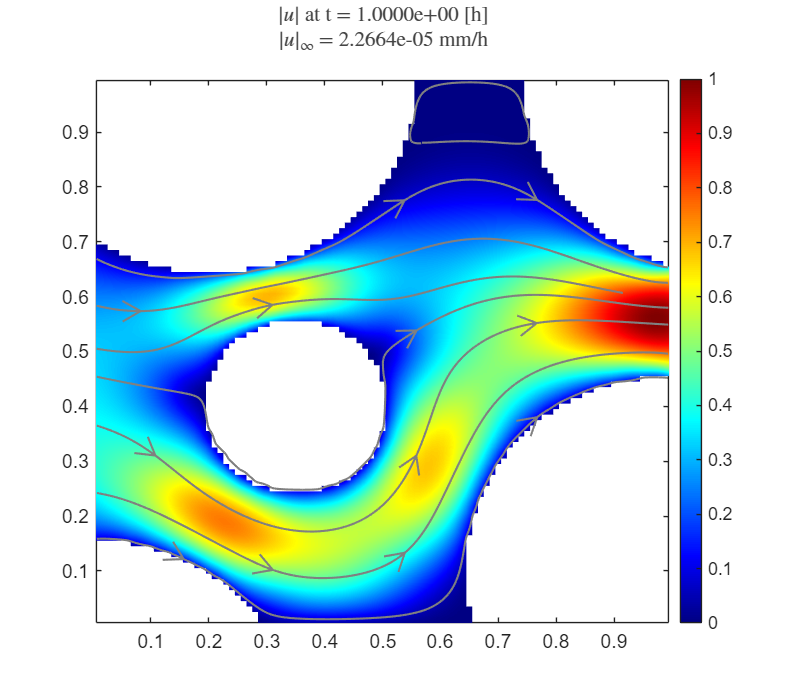}
    \caption{The effect of biofilm permeability on the flow; Left: $k_b=10^{-15}$. Right: $k_b=10^{-5}.$}
    \label{fig:placeholder}
\end{figure}
\myskip{
\begin{figure}[H]
    \centering
    \includegraphics[width=0.35\linewidth]{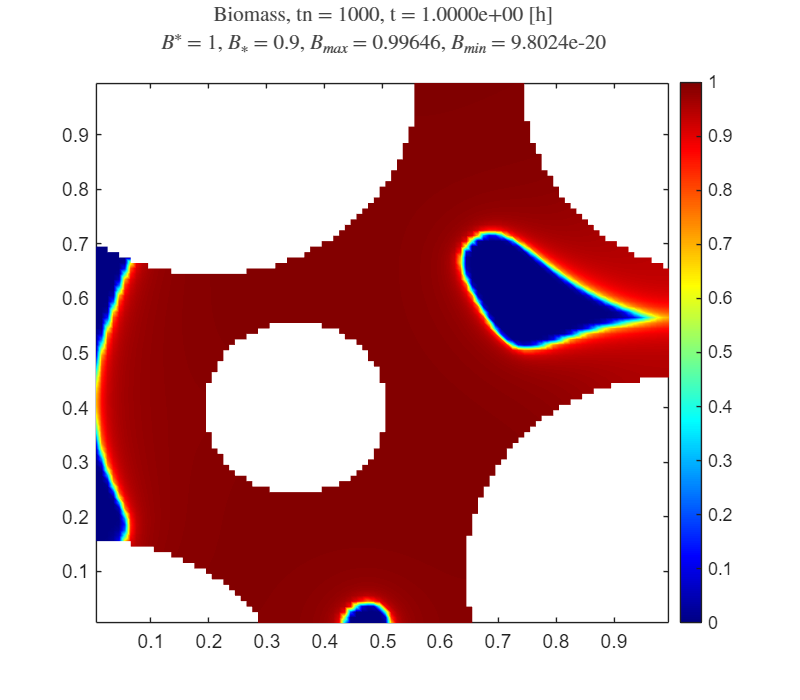}
    \includegraphics[width=0.35\linewidth]{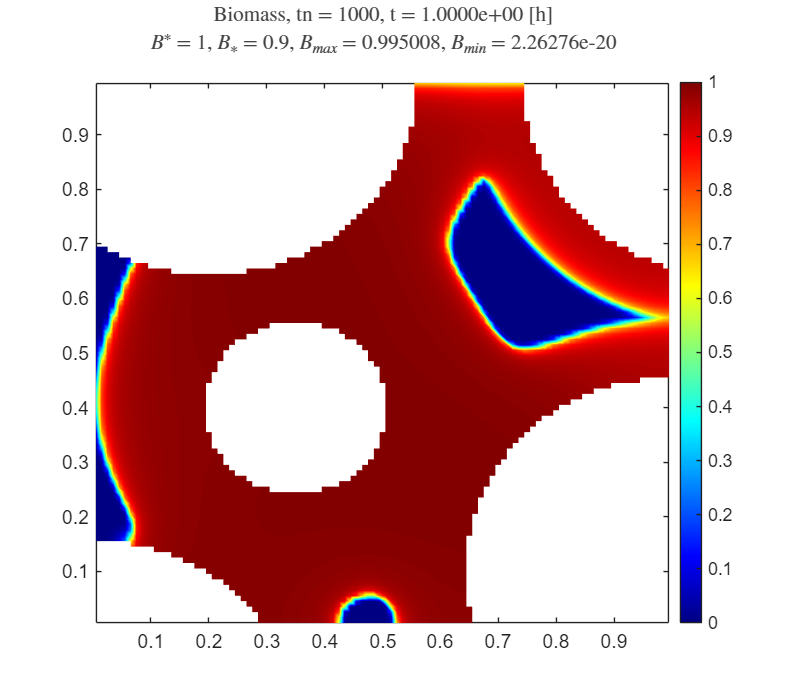}
    \caption{Biofilm growth after 1 hour with $B_{\mathrm{init}}=0.8B^\ast$.
    Left: static domain; right: dynamic domain.}
    \label{fig:case 1}
\end{figure}
}
\myskip{
\begin{figure}[H]
    \centering
    \includegraphics[width=0.3\linewidth]{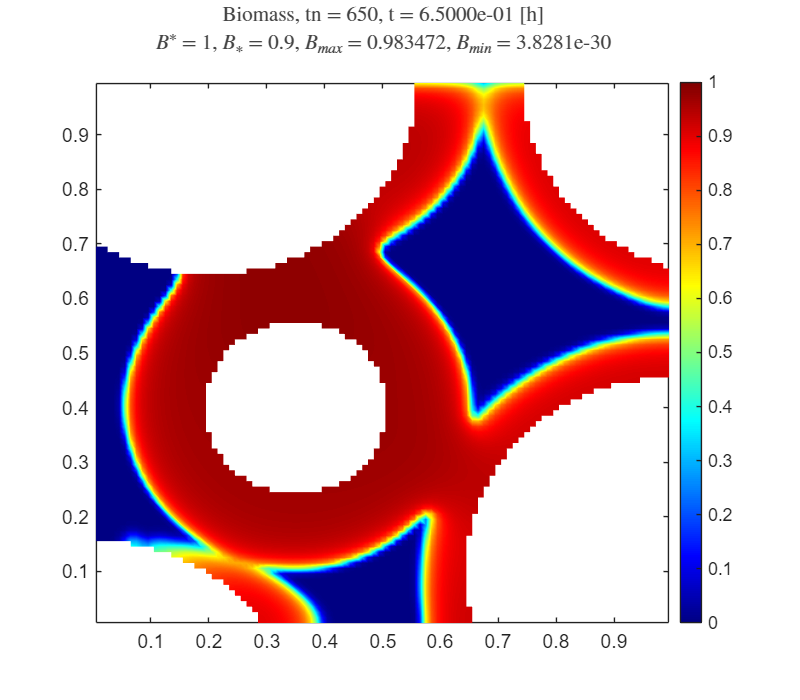}
    \includegraphics[width=0.3\linewidth]{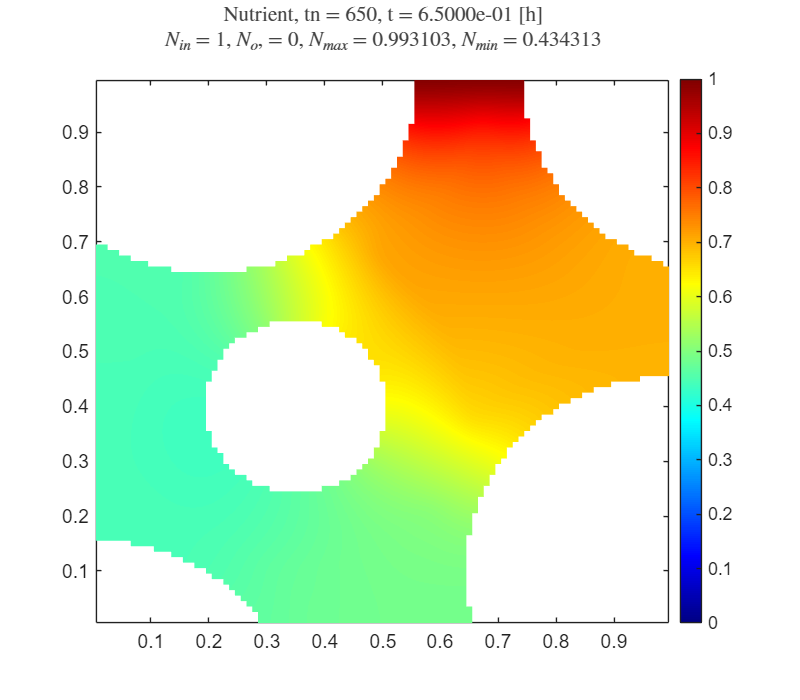}
    \includegraphics[width=0.3\linewidth]{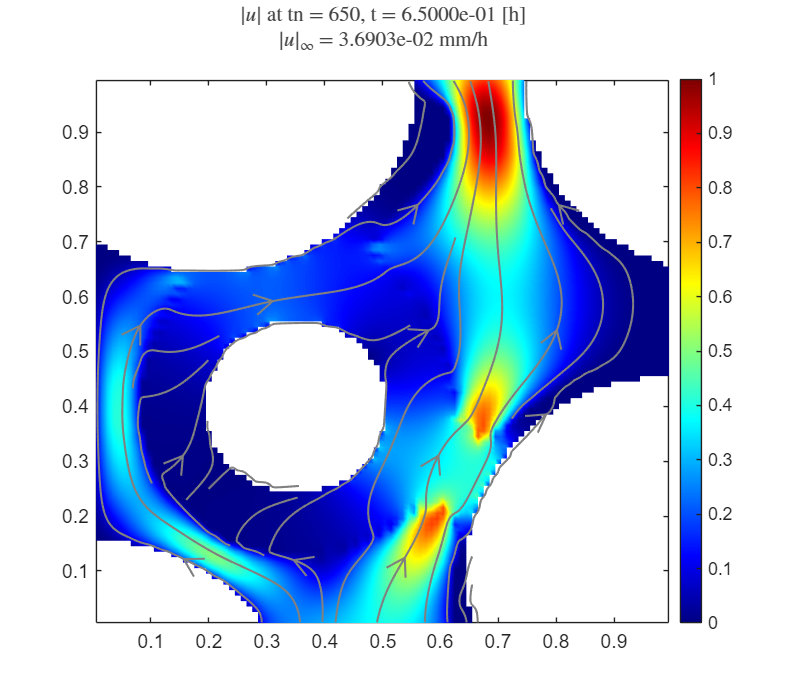}
    \includegraphics[width=0.3\linewidth]{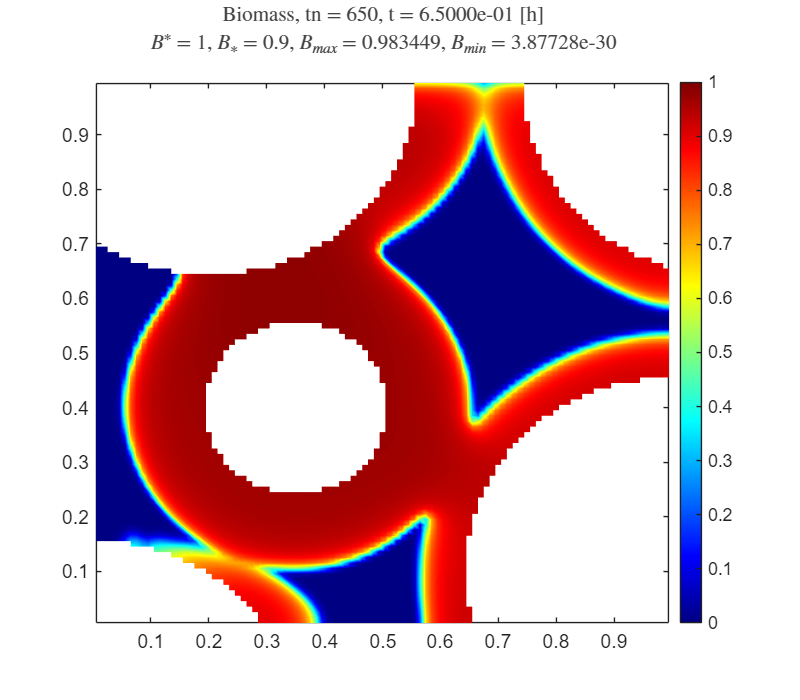}
    \includegraphics[width=0.3\linewidth]{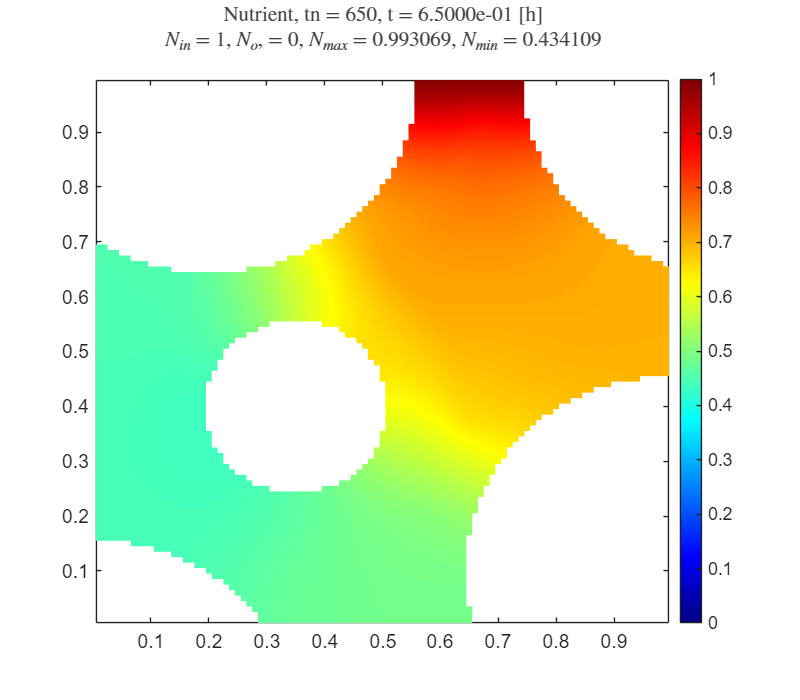}
    \includegraphics[width=0.3\linewidth]{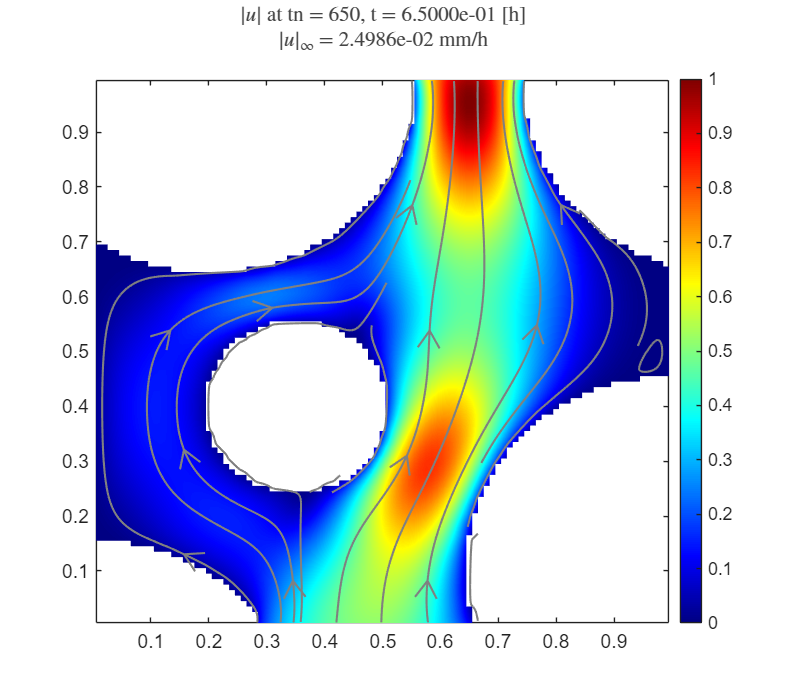}
    \includegraphics[width=0.3\linewidth]{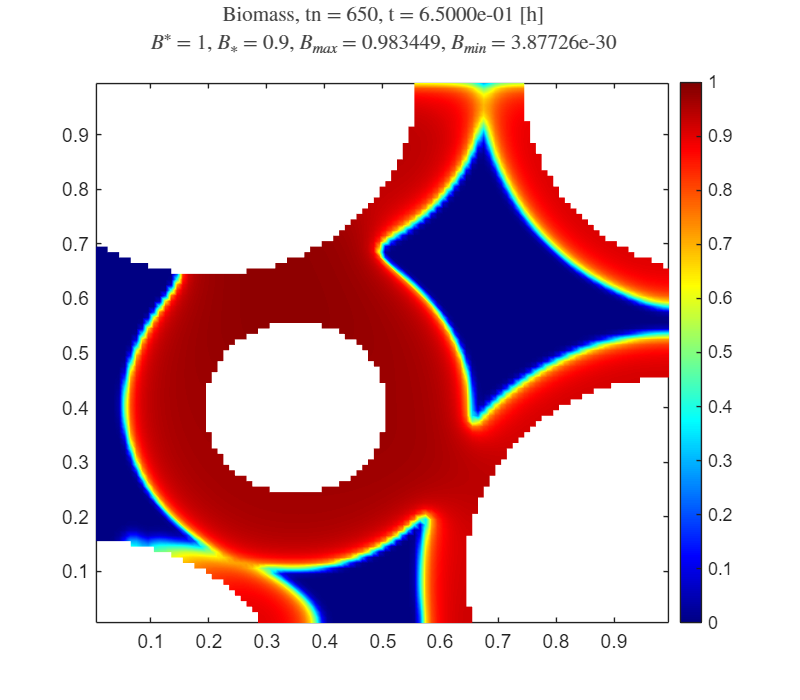}
    \includegraphics[width=0.3\linewidth]{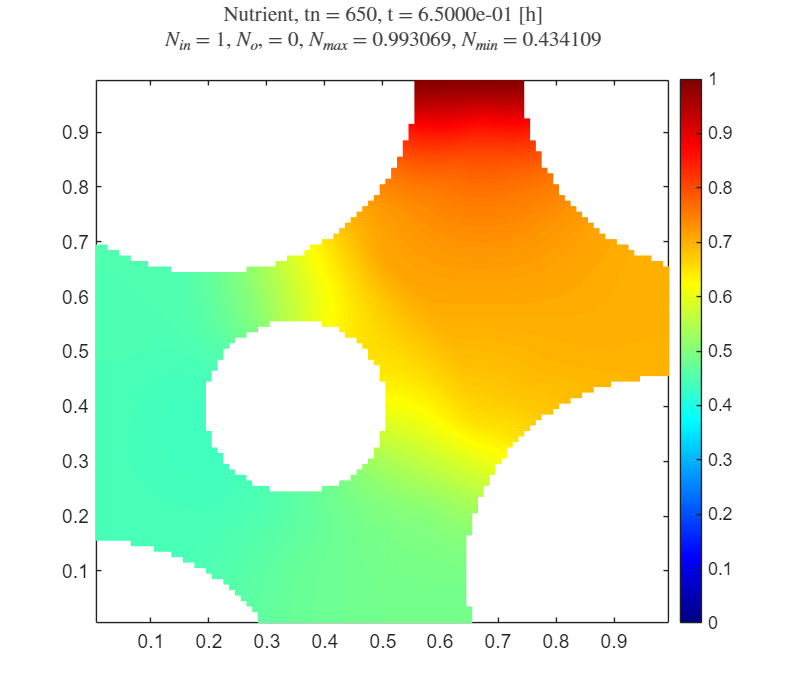}
    \includegraphics[width=0.3\linewidth]{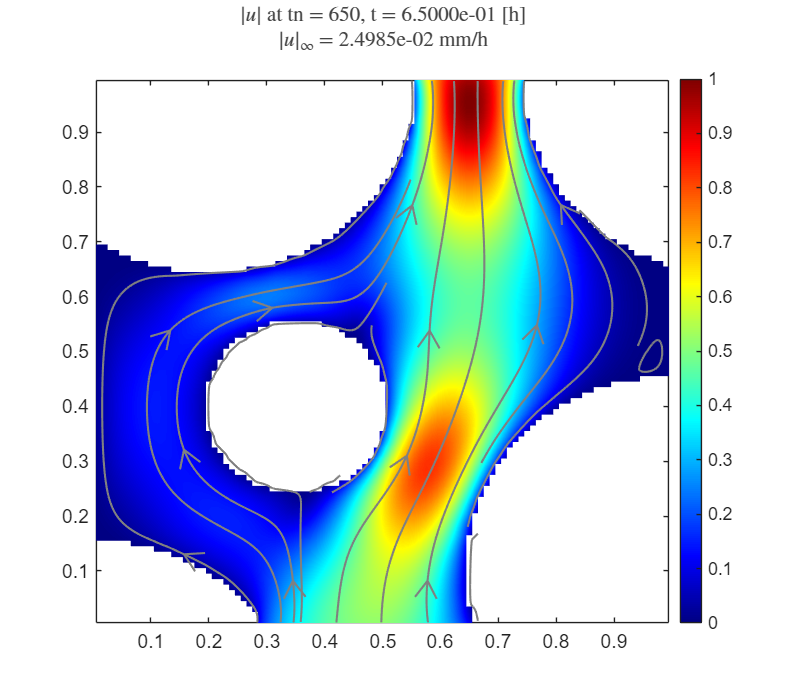}
    \caption{The effect of biofilm permeability on biofilm and flow with
    $B_{\mathrm{init}}=0.8B^\ast$. From left to right: biofilm, nutrient, flow.
    From top to bottom: $k_b=10^{-12}$, $k_b=10^{-5}$, $k_b=10^{-1}$.}
    \label{fig:case 2}
\end{figure}
}
\begin{figure}[H]
    \centering
    \includegraphics[width=0.34\linewidth]{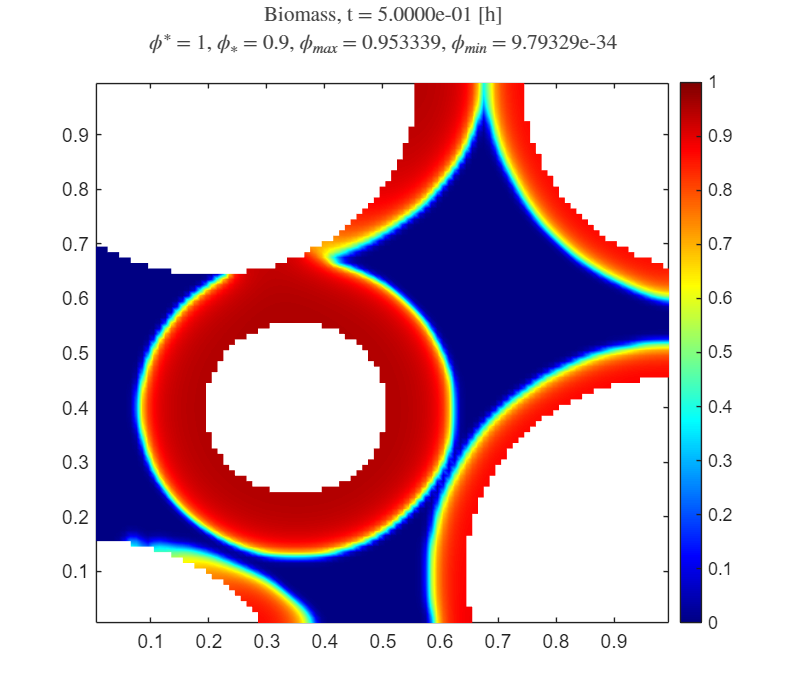}
    \includegraphics[width=0.34\linewidth]{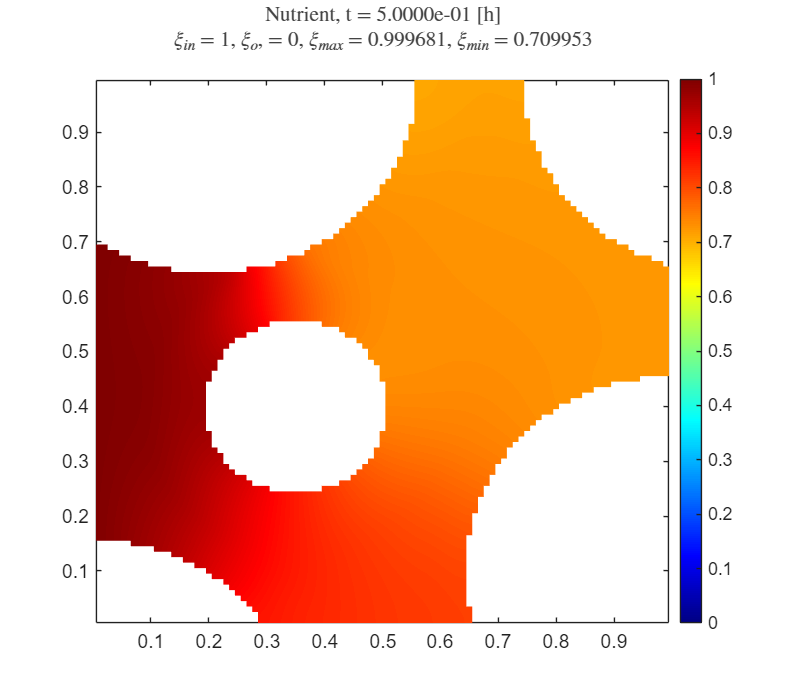}
    \includegraphics[width=0.34\linewidth]{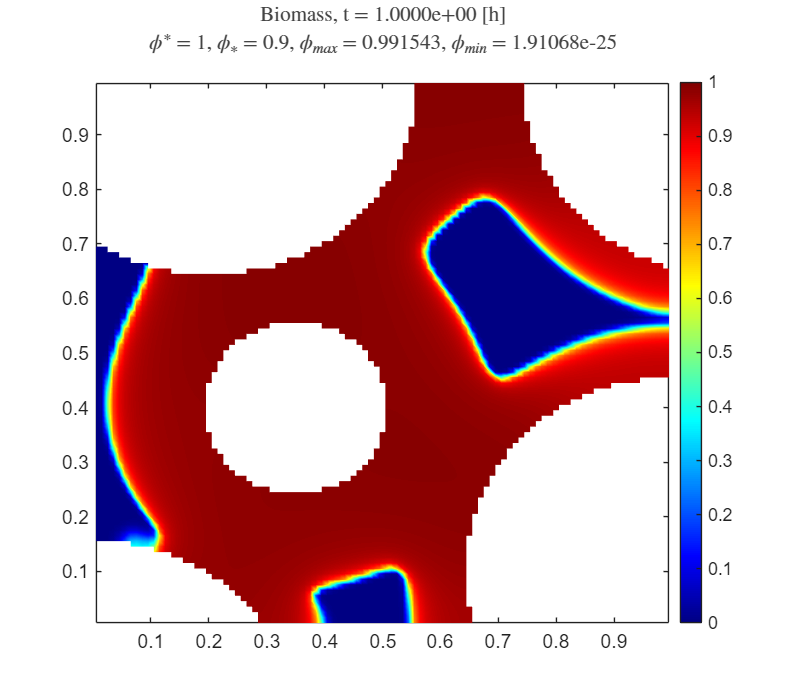}
    \includegraphics[width=0.34\linewidth]{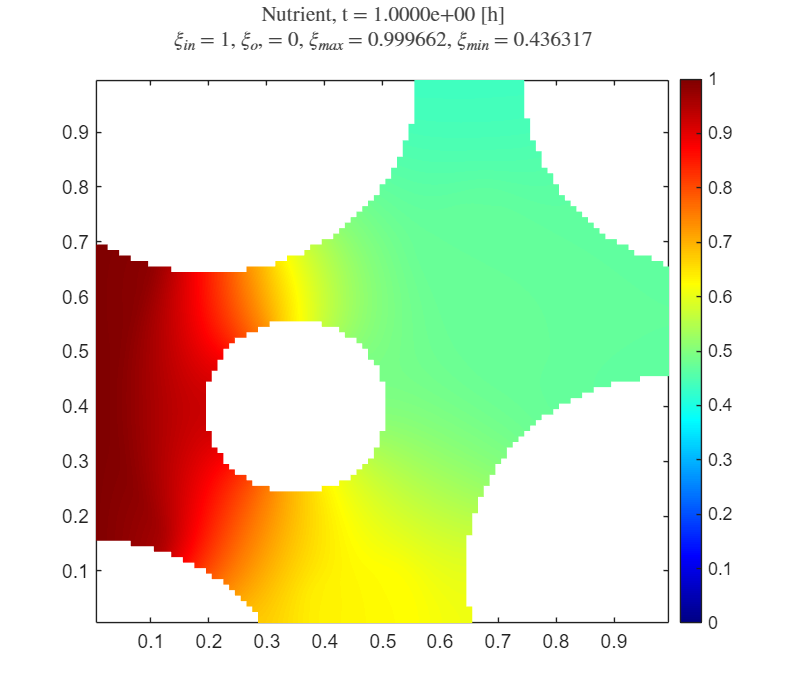}
     \includegraphics[width=0.34\linewidth]{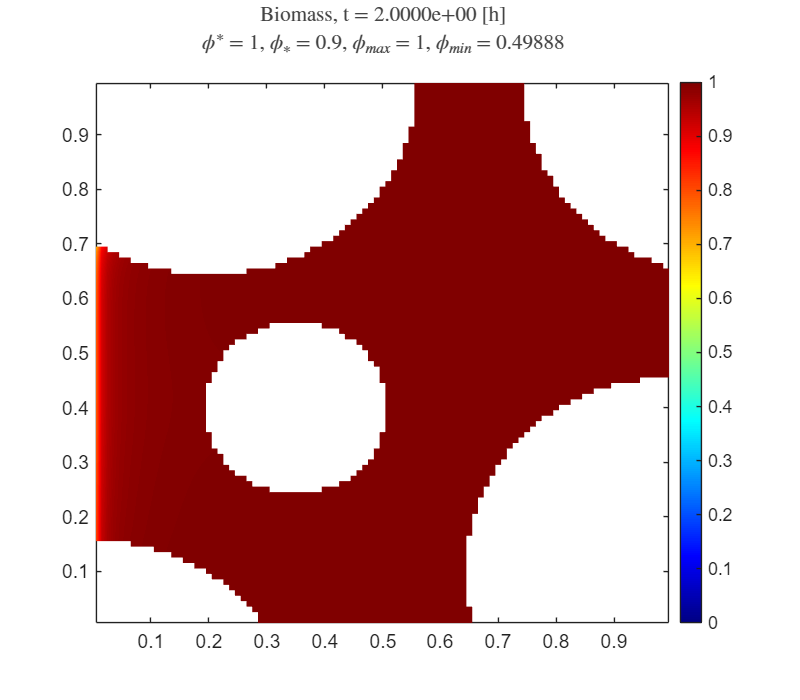}
    \includegraphics[width=0.34\linewidth]{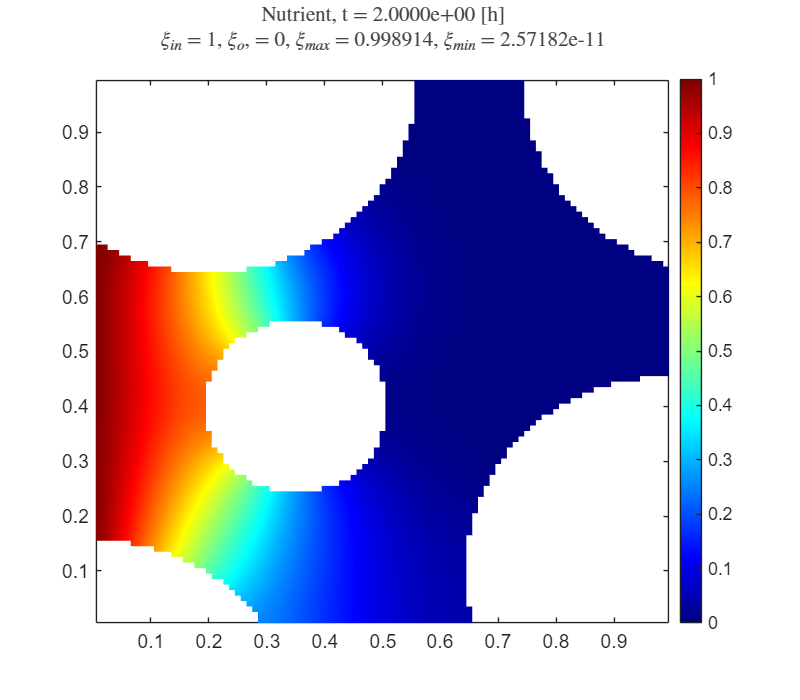}
     \includegraphics[width=0.34\linewidth]{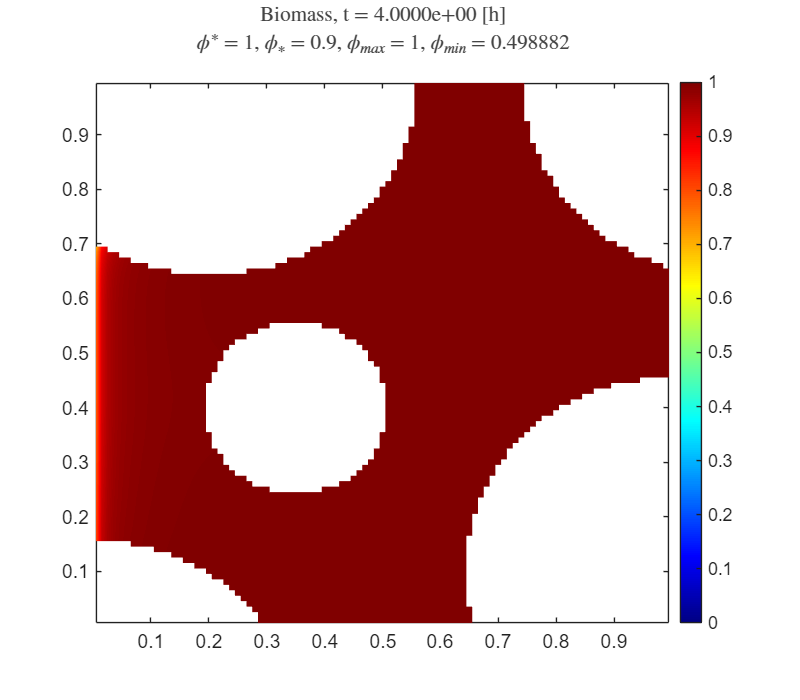}
    \includegraphics[width=0.34\linewidth]{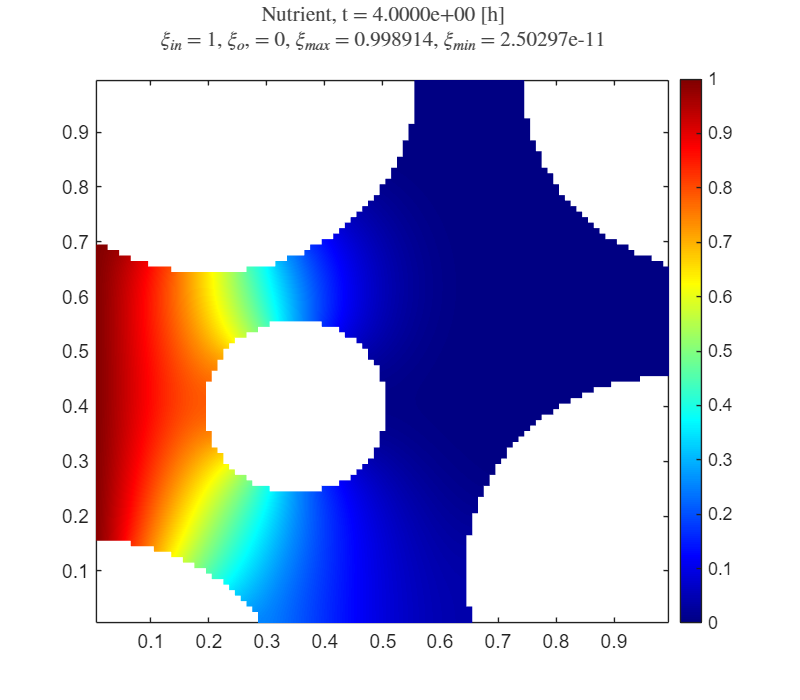}
    \caption{The evolution of biofilm and nutrient over time with $\phi_0=0.7$, $k_b=10^{-5}.$
   }
    \label{fig:case 3}
\end{figure}
\myskip{
\begin{figure}[H]
    \centering
    \includegraphics[width=0.3\linewidth]{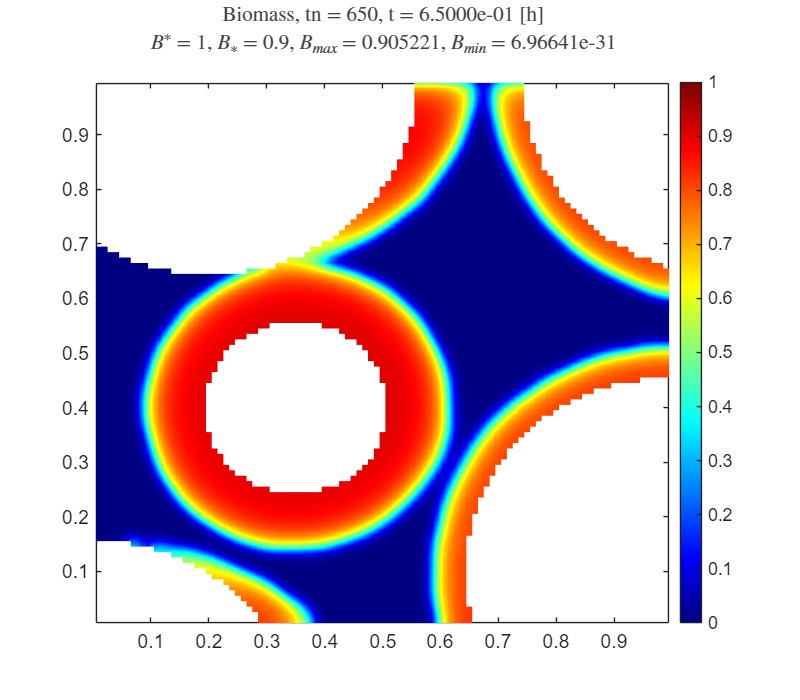}
    \includegraphics[width=0.3\linewidth]{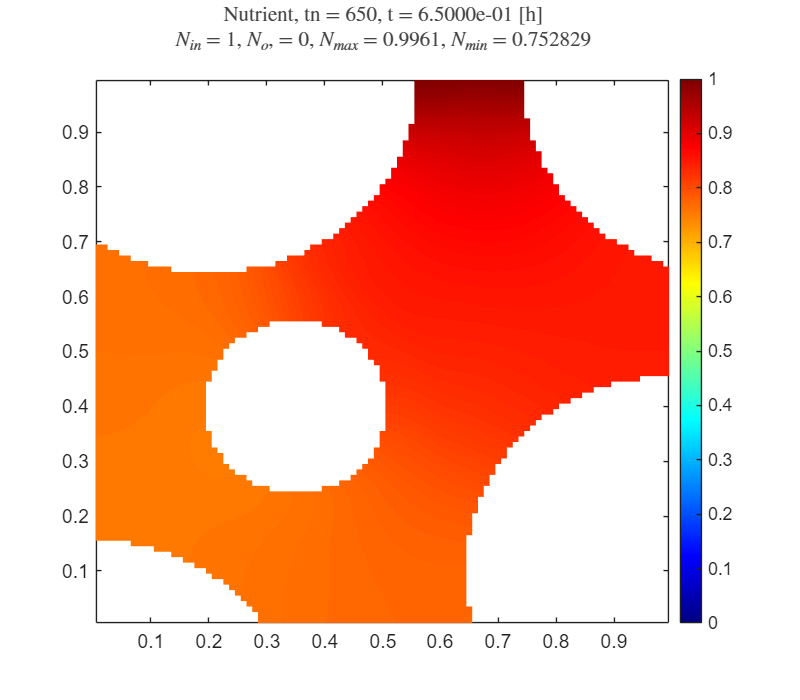}
    \includegraphics[width=0.3\linewidth]{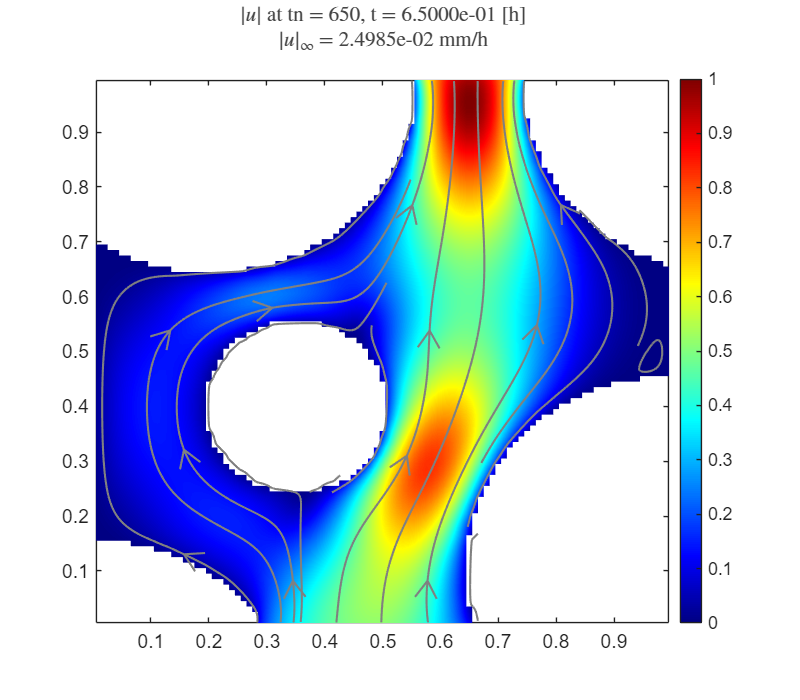}
    \caption{Biofilm, nutrient, and flow with $B_{\mathrm{init}}=0.5B^\ast$ and
    $k_b=10^{-5}$. From left to right: biofilm, nutrient, flow.}
    \label{fig:case 3 alt}
\end{figure}
}

\begin{figure}
    \centering
    \includegraphics[width=0.7\linewidth]{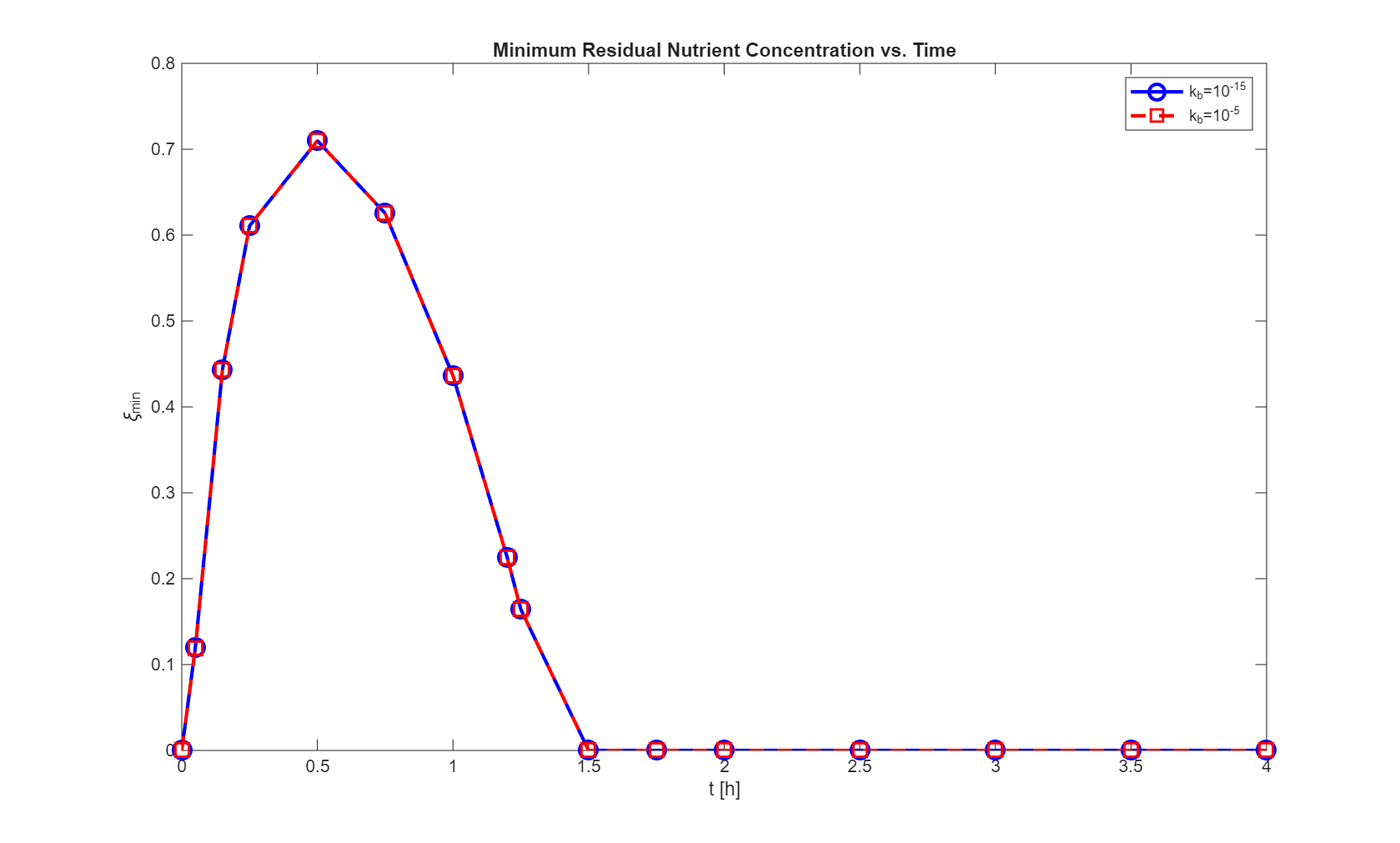}
    \caption{Minimum nutrient concentration over time: the nutrient remains nonnegative throughout the simulation, in agreement with Lemma~\ref{lem:xi-nonneg}.}
    \label{fig:permeability}
\end{figure}


\end{document}